\newcommand{\bnabla}{\boldsymbol{\nabla}}
\newcommand{\btau}{\boldsymbol{\tau}}
\newcommand{\bW}{\boldsymbol{W}}
\newcommand{\bWh}{\boldsymbol{W}\!_h}
\newcommand{\bw}{\boldsymbol{w}}
\newcommand{\CP}{C_\mathrm{P}}
\newcommand{\cT}{\mathcal{T}}
\newcommand{\ddiv}{\operatorname{div}}
\newcommand{\dx}[1][\bfx]{\,\mathrm{d}#1}
\newcommand{\Hdiv}[1][\Omega]{\b{H}(\ddiv,#1)}
\newcommand{\Ieff}{I_{\text{eff}}}
\newcommand{\norm}[1]{\left\|#1\right\|}
\newcommand{\osc}{\operatorname{osc}}
\newcommand{\R}{\mathbb{R}}
\newcommand{\rmd}{\mathrm{d}}
\newcommand{\tkappa}{\widetilde\kappa}
\newcommand{\trinorm}[1]{|\!|\!|#1|\!|\!|}
\newtheorem{theorem}{Theorem}[section]
\newtheorem{definition}[theorem]{Definition}
\numberwithin{equation}{section}
\pgfplotsset{%
  every axis/.append style={%
    y label style={at={(0.1,1.0)},anchor=south west,rotate=-90,color=black},
    yminorgrids
  }
}
\pgfplotsset{compat=1.9}
\date{April 3, 2020}
\begin{document}

\title{%
Guaranteed a posteriori error bounds for low rank tensor approximate solutions
}

\author{Sergey Dolgov\thanks{S. Dolgov is thankful to the Engineering and Physical Sciences Research Council (UK) for the support through Fellowship EP/M019004/1. {\bf Email:}~{\tt s.dolgov@bath.ac.uk}}
\\
University of Bath, Department of Mathematical Sciences\\
Claverton Down, BA2 7AY Bath, United Kingdom
\\
\and 
Tom\'a\v s Vejchodsk\'y\thanks{T. Vejchodsk\'y gratefully acknowledges the support of the Czech Science Foundation, project no.~20-01074S, and the institutional support RVO 67985840. {\bf Email:}~{\tt vejchod@math.cas.cz}}
\\
  Institute of Mathematics,
  Czech Academy of Sciences,
\\  
  {\v Z}itn{\'a} 25, CZ-115 67 Prague 1,
  Czech Republic
}  

\maketitle

\begin{abstract}
We propose a guaranteed and fully computable upper bound on the energy norm of the error in low-rank Tensor Train (TT) approximate solutions of (possibly) high dimensional reaction-diffusion problems.
The error bound is obtained from Euler--Lagrange equations for a complementary flux reconstruction problem, which are solved in the low-rank TT representation using the block Alternating Linear Scheme.
This bound is guaranteed to be above the energy norm of the total error, including the discretization error, the tensor approximation error, and the error in the solver of linear algebraic equations,
although quadrature errors, in general, can pollute its evaluation.
Numerical examples with the Poisson equation and the Schr\"odinger equation with the Henon-Heiles potential in up to 40 dimensions are presented to illustrate the efficiency of this approach.
\end{abstract}

\noindent{\bf Keywords:}
high-dimensional problem, reaction-diffusion, Tensor Train, Alternating Linear Scheme, flux reconstruction, complementary energy

\noindent{\bf MSC2010:}
65N15, 
65N30, 
15A69,  
15A23,  
65F10,  
65N22  

\section{Introduction}
This paper deals with the linear second-order elliptic partial differential equation of a reaction-diffusion type:
\begin{equation}
\label{eq:modpro}
-\Delta u + \kappa^2 u = f \quad \text{in }\Omega,\qquad
u = 0 \quad \text{on }\partial\Omega,
\end{equation}
where $\Omega \subset \R^d$ is a $d$-dimensional hyperrectangle, i.e. a Cartesian product of $d$ intervals.
The reaction coefficient $\kappa = \kappa(x)$ is assumed variable and can attain zero values in general.
The homogeneous Dirichlet boundary conditions are considered for simplicity.
Besides classical reaction-diffusion models in 2--3 dimensions,
implicit time stepping schemes for the Schr\"odinger equation \cite{meyer-mctdh-book-2009,meyer-henon-2002}
require to solve an equation of the form \eqref{eq:modpro},
in arbitrarily high dimension.

This problem is solved by the finite element method using low-rank tensor approximations, see Section \ref{se:tensor}, the book \cite{hackbusch-2012} or surveys \cite{larskres-survey-2013,bokh-surv-2015} for details.
This tensor approach enables us to compute an approximate solution even in high dimensional cases.
Since the number of degrees of freedom grows exponentially with the dimension $d$, the traditional approaches are prohibitively expensive for higher values of $d$.
This phenomenon is known as the curse of dimensionality \cite{bellman-dyn-program-1957} and the low-rank tensor approximation methods allow to break it in many practical cases,
reducing the computational costs and memory demands from exponential to polynomial in $d$.
The main idea is to consider the expansion coefficients of the finite element solution
in the form
of a $d$-dimensional tensor and approximate it by low-rank tensor decomposition.
In particular, we use the simple and robust Tensor Train (TT) decomposition \cite{osel-tt-2011}.

Low-rank tensor approximations bring further error to the computed solution. If a tensor is compressed from the full representation using the singular value decomposition \cite{osel-tt-2011}, this error can be controlled.
However, in practice, the full storage is not possible, and one computes a TT representation directly, using iterative interpolation or solution techniques.
In this case, it is challenging to obtain guaranteed and sharp estimates of the approximation error. Some recent results \cite{bachmayr-sparse-or-lr-2017} rely on a particular solution method that might be not the fastest one.
Guaranteed a priori estimates on the convergence rate \cite{ds-amen-2014,ushmaev-tt-2013} and approximation error \cite{tee-tensor-2003,uschmajew-approx-rate-2013,griebel-svd-sg-2013} are often too pessimistic.

In this paper, we propose a guaranteed a posteriori error estimator for a low-rank high dimensional solution, which is independent of a particular approximation algorithm, while being locally efficient up to higher-order terms \cite{AinVej:2018}.

This estimator is based on a complementary problem \cite{Complement:2010}.
It is a second-order elliptic partial differential equation that can be naturally discretized by Raviart--Thomas finite elements.
We approximate its solution by a low-rank tensor and use it to compute the sharp upper bound on the energy norm of the total error of the solution of the original reaction-diffusion problem.

This type of guaranteed a posteriori error bounds for linear second-order elliptic partial differential equations is already studied for many decades in the low dimensional case. The idea can be traced back to the method of hypercircle \cite{PraSyn:1947}.
After decades of development it attracted a lot of attention in recent years, see books \cite{AinOde:2000,LadPel2005,MoiMau2017,NeiRep:2004,Rep:2008}, papers \cite{AinBab:1999,CaiZha:2010,HanSteVoh:2012,LucWoh:2004,ParDie:2017,ParSanDie2009}, and references there in.
Guaranteed and robust error bounds for the particular reaction-diffusion problem \eqref{eq:modpro} were derived in
\cite{CheFucPriVoh:2009} for the vertex-centred finite volume method and in~\cite{AinBab:1999,robustaee:2010,AinVej:2014,AinVej:2018,SmeVoh:2018} for the finite element method.
A posteriori error estimates applicable to separable solutions were explored in~\cite{khoromskij-hom-2017}.

The rest of the paper is organized as follows.
Section~\ref{se:aee} derives the guaranteed upper bound on the total error of the finite element solution.
Section~\ref{se:fluxrec} introduces the complementary problem for the flux reconstruction.
Section~\ref{se:tensor} presents the main idea of low-rank tensor approximations, including the Cartesian grid, indexing, tensor train decomposition, the alternating scheme for solving systems of linear algebraic equations, tensorization of the complementary problem and the block alternating linear scheme for its solution, tensorized Gauss--Legendre quadrature, and a specific procedure for evaluation of the error estimator.
Section~\ref{se:numex} shows numerical results for the Poisson problem, reaction-diffusion problem, and Schr\"odinger equation with Henon--Heiles potential.
Finally, Section~\ref{se:concl} draws the conclusions and ideas for further research.

\section{Guaranteed a posteriori error bound}
\label{se:aee}

The well known Sobolev space $H^1_0(\Omega)$ consists of square integrable functions with square integrable distributional derivatives and with zero traces on the boundary $\partial\Omega$.
Denoting by $(\cdot,\cdot)$ both the $L^2(\Omega)$ and $[L^2(\Omega)]^d$ inner products,
the weak solution of problem \eqref{eq:modpro} is introduced as a function $u \in H^1_0(\Omega)$ such that
\begin{equation}
\label{eq:weakf}
(\bnabla u , \bnabla v) + (\kappa^2 u, v) = (f, v)
\quad\forall v \in H^1_0(\Omega).
\end{equation}
To guarantee integrability we consider $\kappa \in L^\infty(\Omega)$ and $f \in L^2(\Omega)$.

This problem is solved numerically using the standard finite element method of the first order, and the finite element solution is further approximated by a low-rank tensor as described below in Section~\ref{se:tensor}.
However, at this point, the particular details about the numerical solution are not important, because the guaranteed error bound, we will introduce, is independent of the used numerical method and applies to arbitrary conforming approximation $u_h \in H^1_0(\Omega)$ of $u$.

In order to introduce the guaranteed error bound, we denote by
$\cT_h$ the usual finite element mesh of the domain $\Omega$. More precisely,
$\cT_h$ is a set of closed $d$-dimensional hyperrectangles, called elements, which
form a face-to-face partition of $\Omega$.
Symbol $\Pi_K : L^2(K) \rightarrow \mathbb{Q}_{1,1,\dots,1}(K)$ stands for the $L^2(K)$-orthogonal
projector to the space $\mathbb{Q}_{1,1,\dots,1}(K)$ consisting of functions defined in the element $K\in\cT_h$ and being linear in each of their $d$ variables.
It is useful to introduce symbol $\Pi$ for elementwise concatenation of projectors $\Pi_K$ such that $(\Pi f)|_K = \Pi_K f$ for all $K\in\cT_h$.
Further, we denote by $\norm{\cdot}$ the norm in $L^2(\Omega)$ and by $\norm{\cdot}_K$ the norm in $L^2(K)$ for $K\in\cT_h$,
using the same symbol for both scalar and vector quantities.
The energy norm is given as $\trinorm{v}^2 = \norm{\bnabla v}^2 + \norm{\kappa v}^2$ for all $v \in H^1_0(\Omega)$
and its restriction to elements $K\in\cT_h$ as $\trinorm{v}_K^2 = \norm{\bnabla v}_K^2 + \norm{\kappa v}_K^2$.
It is convenient to recall the Poincar\'e inequality
\begin{equation}
  \label{eq:Poincare}
  \norm{ v } \leq \CP \norm{ \bnabla v } \quad \forall v \in H^1_0(\Omega).
\end{equation}
In the particular setting of the hyperrectangle and homogeneous Dirichlet boundary conditions the optimal value of $\CP$ is known to be
\begin{equation}
  \label{eq:CP}
  \CP = \pi^{-1} \left( \sum_{i=1}^d L_i^{-2} \right)^{-1/2},
\end{equation}
where $L_i$, $i=1,2,\dots,d$, are lengths of sides of the hyperrectangle $\Omega$,
see \cite{Mikhlin:1986}. For Neumann or some instances of mixed Dirichlet--Neumann boundary conditions, similar analytical formulas can be derived.
For general domains $\Omega$ and general types of boundary conditions, guaranteed upper bound on the optimal value of the Poincar\'e constant $\CP$ can be computed numerically, because $\CP = \lambda_1^{-1/2}$, where $\lambda_1$ is the first eigenvalue of the Laplacian in $\Omega$ with the corresponding boundary conditions and methods for guaranteed lower bounds on $\lambda_1$ exist, see e.g. recent works \cite{CarGal2014,CarGed2014,Liu2015,GoeHau1985,CanDusMadStaVoh2018,SebVej:2014} and references therein.
However, the case of general, e.g. polytopic, domains discretized by general, e.g. simplicial, meshes is not considered here, because the tensor approximation techniques would not be applicable.

Since zero values of the reaction coefficient $\kappa$ cause technical difficulties, we introduce a constant shift parameter $\kappa_0 > 0$ and define
\begin{equation}
\label{eq:tkappa}
\tkappa(x) = \kappa(x) + \kappa_0.
\end{equation}
Note that in principle $\kappa_0$ could be considered piecewise constant respecting the Cartesian structure of the problem. However, for the sake of simpler exposition, we do not consider this special case.
In order to simplify the notation, we set $r = f - \kappa^2 u_h$ and denote by $h_K$ the diameter of $K$.

The \emph{error estimator}, \emph{local error indicators} and the \emph{oscillation} term are defined by rules
\begin{align}
\label{eq:eta}
  \eta^2(\btau) &= \sum_{K\in\cT_h} (\eta_K(\btau) + \osc_K(r))^2,
\\
\label{eq:etaK}
  \eta_K^2(\btau) &= \norm{\btau - \bnabla u_h}_K^2
      + \norm{\tkappa^{-1} ( \Pi_K r + \ddiv \btau) }_K^2,
\\
\label{eq:oscK}
  \osc_K(r) &= \min \left\{ h_K \pi^{-1} \norm{r - \Pi_K r}_K,  \norm{\kappa^{-1} (r - \Pi_K r)}_K \right\},
\end{align}
respectively.
Quantity $\btau \in \Hdiv$,
where $\Hdiv$ stands for the well-known space of square integrable vector fields with square integrable divergence,
is called the \emph{flux}. At this moment it can be arbitrary, but its specific choice will be discussed in Section~\ref{se:fluxrec} below.
Note that if the norm $\norm{\kappa^{-1} (r - \Pi_K r)}_K$ is not defined, typically if $\kappa = 0$ in $K$, then we consider this norm to be infinite and $\osc_K(r) = h_K \pi^{-1} \norm{r - \Pi_K r}_K$.
Further note that estimator \eqref{eq:eta} differs from \cite[eq. (5)]{Complement:2010} in the oscillation term \eqref{eq:oscK}. Therefore, the estimator \eqref{eq:eta} behaves robustly even for small values of $\kappa$.

The following theorem is a generalization of \cite[Theorem 1]{AinVej:2018} for the variable coefficient $\kappa$. It provides the guaranteed and fully computable upper bound on the energy norm of the total error.
\begin{theorem}
\label{th:main}
Let $u \in H^1_0(\Omega)$ be the weak solution given by \eqref{eq:weakf}.
Let $u_h \in H^1_0(\Omega)$, $\btau \in\Hdiv$, and $\kappa_0 > 0$ be arbitrary.
Then
\begin{equation}
\label{eq:upperbound}
\trinorm{u - u_h} \leq
  \eta(\btau) + \kappa_0\CP \left(
    \sum_{K\in\cT_h} \norm{\tkappa^{-1} ( \Pi_K r + \ddiv \btau) }_K^2
  \right)^{1/2}
\end{equation}
with $\CP$ given by \eqref{eq:CP}.
Moreover, if $0 < \operatorname{ess\,inf}_\Omega \kappa$ then estimate \eqref{eq:upperbound} holds with $\kappa_0 = 0$.
\end{theorem}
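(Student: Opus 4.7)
The plan is to test \eqref{eq:weakf} against $v = u - u_h \in H^1_0(\Omega)$ (legitimate by the conforming assumption on $u_h$) and expand the energy norm as
\begin{equation*}
\trinorm{u-u_h}^2 = (\bnabla(u-u_h), \bnabla v) + (\kappa^2(u-u_h), v) = (r, v) - (\bnabla u_h, \bnabla v),
\end{equation*}
with $r = f - \kappa^2 u_h$, where the second equality comes from \eqref{eq:weakf}. For arbitrary $\btau \in \Hdiv$, Green's formula $(\btau, \bnabla v) + (\ddiv\btau, v) = 0$ (valid because $v$ vanishes on $\partial\Omega$) then yields the fundamental identity
\begin{equation*}
\trinorm{u-u_h}^2 = (\btau - \bnabla u_h, \bnabla v) + (\Pi r + \ddiv \btau, v) + (r - \Pi r, v),
\end{equation*}
after decomposing $r + \ddiv\btau = (\Pi r + \ddiv\btau) + (r - \Pi r)$ via the elementwise projector $\Pi$.

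Writing $b_K = \norm{\btau - \bnabla u_h}_K$ and $a_K = \norm{\tkappa^{-1}(\Pi_K r + \ddiv\btau)}_K$, I would next bound the three terms element by element. The flux term is handled by element-wise Cauchy--Schwarz, producing $\sum_K b_K \norm{\bnabla v}_K$. For the middle term I would insert $1 = \tkappa^{-1}\tkappa$ and split $\tkappa v = \kappa v + \kappa_0 v$: the $\kappa v$ contribution yields $\sum_K a_K \norm{\kappa v}_K$, while the $\kappa_0 v$ contribution, after a global Cauchy--Schwarz over $K$ and the Poincar\'e inequality \eqref{eq:Poincare} applied to $v \in H^1_0(\Omega)$, produces exactly the extra term $\kappa_0 \CP (\sum_K a_K^2)^{1/2} \norm{\bnabla v}$ of \eqref{eq:upperbound}. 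For the oscillation I would exploit the $L^2$-orthogonality $(r - \Pi_K r, w)_K = 0$ for all $w \in \mathbb{Q}_{1,1,\dots,1}(K)$ to rewrite $(r - \Pi_K r, v)_K = (r - \Pi_K r, v - \Pi_K v)_K$ and bound it in two ways: either by $\norm{\kappa^{-1}(r - \Pi_K r)}_K \norm{\kappa v}_K$ directly, or by $\norm{r - \Pi_K r}_K \norm{v - \Pi_K v}_K$ combined with the tensor-product local Poincar\'e estimate $\norm{v - \Pi_K v}_K \leq (h_K/\pi) \norm{\bnabla v}_K$ on the hyperrectangle $K$; the minimum of the two matches the definition \eqref{eq:oscK} of $\osc_K(r)$, paired with either $\norm{\bnabla v}_K$ or $\norm{\kappa v}_K$.

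To assemble the final bound I would use the elementary inequality $\sqrt{(b+c)^2 + a^2} \leq \sqrt{b^2 + a^2} + c$ for nonnegative $a, b, c$: for each $K$, the flux, residual and oscillation contributions sum to at most $(\eta_K(\btau) + \osc_K(r))\, \trinorm{v}_K$, irrespective of which of the two options was used in $\osc_K(r)$. A global Cauchy--Schwarz in $K$ then collapses $\sum_K (\eta_K(\btau) + \osc_K(r))\, \trinorm{v}_K$ into $\eta(\btau)\,\trinorm{v}$, and the $\kappa_0$-piece is bounded above by $\kappa_0 \CP (\sum_K a_K^2)^{1/2}\, \trinorm{v}$ using $\norm{\bnabla v} \leq \trinorm{v}$. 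Dividing by $\trinorm{v} = \trinorm{u-u_h}$ produces \eqref{eq:upperbound}. For the final assertion, if $\operatorname{ess\,inf}_\Omega \kappa > 0$ one may set $\kappa_0 = 0$ from the outset: the splitting $\tkappa v = \kappa v + \kappa_0 v$ collapses to the direct $\kappa$-pairing, $\tkappa = \kappa$ is strictly positive so all denominators remain well defined, and the extra term disappears. I expect the main obstacle to be the combining step --- distributing the oscillation additively inside the $\eta_K + \osc_K$ structure rather than quadratically, and keeping straight the distinction between the local Poincar\'e constant $h_K/\pi$ on each $K$ for the oscillation and the global constant $\CP$ on $\Omega$ for the $\kappa_0$-piece.
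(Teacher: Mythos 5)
Your proposal is correct and follows essentially the same route as the paper's own proof: the same fundamental identity \eqref{eq:erreq}, the same two-way estimate of the oscillation term leading to \eqref{eq:oscK}, the same splitting $\norm{\tkappa v}_K \le \norm{\kappa v}_K + \norm{\kappa_0 v}_K$, and the same use of the local and global Poincar\'e inequalities. The only cosmetic deviations are that you subtract $\Pi_K v$ instead of the elementwise mean $\overline{v}_K$ (equivalent, since constants lie in $\mathbb{Q}_{1,\dots,1}(K)$ and $\Pi_K$ is the $L^2(K)$-orthogonal projector, so $\norm{v-\Pi_K v}_K \le \norm{v-\overline{v}_K}_K$), and that you absorb the oscillation via the elementary inequality $\sqrt{(b+c)^2+a^2}\le\sqrt{b^2+a^2}+c$, whereas the paper reaches the same $(\eta_K(\btau)+\osc_K(r))\trinorm{v}_K$ by simply bounding each oscillation branch by $\trinorm{v}_K$ before the discrete Cauchy--Schwarz step.
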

\begin{proof}
The notation $v = u - u_h$, $r=f - \kappa^2 u_h$, the weak formulation \eqref{eq:weakf}, and the divergence theorem
for $\btau \in\Hdiv$
yield identity
\begin{multline}
 \label{eq:erreq}
 \trinorm{u-u_h}^2 = (\bnabla u - \bnabla u_h, \bnabla v) + (\kappa^2 u - \kappa^2 u_h,v)
 + (\btau,\nabla v) + (\ddiv\btau,v)
 =
\\
 \sum_{K\in\cT_h} \left[ (\btau - \bnabla u_h, \bnabla v)_K + (\Pi_K r + \ddiv\btau, v)_K + (r - \Pi_K r, v)_K
 \right],
\end{multline}
where $(\cdot,\cdot)_K$ stand for the $L^2(K)$ inner product.
The last inner product on the right-hand side can be estimated by the Cauchy--Schwarz inequality in two different ways:
\begin{align*}
  (r - \Pi_K r, v)_K &\leq \norm{\kappa^{-1}(r - \Pi_K r)}_K \norm{\kappa v}_K \leq \norm{\kappa^{-1}(r - \Pi_K r)}_K \trinorm{v}_K,
\\
  (r - \Pi_K r, v)_K &= (r - \Pi_K r, v - \overline{v}_K)_K \leq \norm{r - \Pi_K r}_K \norm{v - \overline{v}_K}_K
\\
    &\leq h_K \pi^{-1} \norm{r - \Pi_K r}_K \trinorm{v}_K,
\end{align*}
where $\overline{v}_K = |K|^{-1} (v,1)_K$ stands for the integral average of $v$ over $K$ and
the Poincar\'e inequality $\norm{v - \overline{v}_K}_K \leq h_K \pi^{-1} \norm{\bnabla v}_K$ is used
\cite{PayWei:1960,Bebendorf:2003}.
Thus,
\begin{equation}
  \label{eq:oscKest}
  (r - \Pi_K r, v)_K \leq \osc_K(r) \trinorm{v}_K,
\end{equation}
where we recall that if $\norm{\kappa^{-1}(r - \Pi_K r)}_K$ is not defined due to vanishing $\kappa$ then it is considered to be infinity.

Since $\tkappa > 0$, we multiply and divide the second inner product on the right-hand side of \eqref{eq:erreq} by $\tkappa$, use the Cauchy--Schwarz inequality and \eqref{eq:oscKest} to obtain
\begin{multline}
\label{eq:errest}
 \trinorm{u-u_h}^2
 \leq
   \sum_{K\in\cT_h}
    \left( \norm{\btau - \bnabla u_h}_K \norm{\bnabla v}_K
        + \norm{\tkappa^{-1} (\Pi_K r + \ddiv\btau)}_K \norm{\tkappa v}_K
        + \osc_K(r) \trinorm{v}_K
    \right).
\end{multline}
The triangle inequality $\norm{\tkappa v}_K \leq \norm{\kappa v}_K + \norm{\kappa_0 v}_K$ and
the bound
\begin{multline*}
\norm{\btau - \bnabla u_h}_K \norm{\bnabla v}_K
        + \norm{\tkappa^{-1} (\Pi_K r + \ddiv\btau)}_K \norm{\kappa v}_K
\\
\leq \left( \norm{\btau - \bnabla u_h}_K^2 + \norm{\tkappa^{-1} (\Pi_K r + \ddiv\btau)}_K^2 \right)^{1/2}
     \left( \norm{\bnabla v}_K^2 + \norm{\kappa v}_K^2 \right)^{1/2}
\end{multline*}
provide an estimate
\begin{multline}
\label{eq:errest2}
 \trinorm{u-u_h}^2
 \leq
    \sum_{K\in\cT_h}
      \left( \eta_K(\btau) + \osc_K(r) \right) \trinorm{v}_K
  + \sum_{K\in\cT_h}
      \norm{\tkappa^{-1} (\Pi_K r + \ddiv\btau)}_K \norm{\kappa_0 v}_K.
\end{multline}
Separate application of the Cauchy--Schwarz inequality to the first and second sum yields
$$
\trinorm{u-u_h}^2
\leq
\eta(\btau) \trinorm{v}
 + \kappa_0 \norm{v} \left( \sum_{K\in\cT_h}
      \norm{\tkappa^{-1} (\Pi_K r + \ddiv\btau)}_K^2 \right)^{1/2}.
$$
Poincar\'e inequality \eqref{eq:Poincare} and notation $v = u - u_h$ now implies the inequality \eqref{eq:upperbound}.

Moreover, if $0 < \operatorname{ess\,inf}_\Omega \kappa$ then it is possible to choose $\kappa_0 = 0$.
Function $\tkappa$ is then positive and the whole proof, specifically \eqref{eq:errest}, remains valid.
\end{proof}

Theorem~\ref{th:main} can be generalized to the case of nonhomogeneous Neumann and mixed Dirichlet--Neumann boundary conditions in a similar way as in \cite{AinVej:2018}. However, Neumann boundary conditions bring forward additional terms in the definition of the error estimator and one more oscillation term. We wish to avoid these terms to simplify the exposition and present the main idea of the tensor approximation without unnecessary heavy notation.

Note that the upper bound \eqref{eq:upperbound} can be simplified for the price of its slight increase.
Indeed, using $\norm{\tkappa^{-1} (\Pi_K r + \ddiv\btau)}_K \leq \eta_K(\btau) \leq \eta_K(\btau) + \osc_K(r)$ in \eqref{eq:errest2}, we easily obtain bound
\begin{equation*}
\trinorm{u - u_h} \leq
  \left(1+\kappa_0^2\CP^2\right)^{1/2}\eta(\btau).
\end{equation*}
Further note that triangle inequality implies
\begin{equation}
\label{eq:tildeeta}
\eta(\btau) \leq \tilde\eta(\btau)
  = \left( \sum_{K\in\cT_h} \eta_K^2 \right)^{1/2}
  + \left( \sum_{K\in\cT_h} \osc_K^2(r) \right)^{1/2},
\end{equation}
and definition \eqref{eq:etaK} gives
\begin{equation}
\label{eq:sumetaKsq}
\sum_{K\in\cT_h} \eta_K^2
  = \norm{\btau - \bnabla u_h}^2 + \norm{\tkappa^{-1} ( \Pi r + \ddiv \btau) }^2.
\end{equation}
Since values $\kappa^{-1}$ in \eqref{eq:oscK} may cause quadrature errors and technical problems in higher dimension, we found the following estimate to be useful:
\begin{equation}
\label{eq:sumoscKsq}
\osc_K^2(r)
  \leq \min \left\{ h_K \pi^{-1}, \max_K \kappa^{-1} \right\} \norm{r - \Pi r}_K.
\end{equation}

Concerning the shift parameter $\kappa_0$, it should be chosen small. Ideally so small that $\kappa_0\CP$ is negligible with respect to 1 and $1 + \kappa_0 \CP \approx 1$.
If the reaction coefficient $\kappa$ is bounded away from zero in $\Omega$ then neither the shift $\kappa_0$ nor the Poincar\'e constant $\CP$ are needed and estimate \eqref{eq:upperbound} holds with $\kappa_0 = 0$.

An alternative upper bound on the energy norm of the error is based on the choice of
the flux $\btau$ such that it satisfies an equilibration condition, for example $\Pi_K r + \ddiv \btau = 0$ or $\int_K (\Pi_K r + \ddiv \btau) \dx = 0$ for all $K\in\cT_h$, see \cite{SmeVoh:2018}.
This alternative upper bound is similar to the bound presented in Theorem~\ref{th:main} and
the equilibration condition enables us to avoid technical difficulties with vanishing $\kappa$ and neither the shift parameter $\kappa_0$ nor the Poincar\'e constant $\CP$ are needed. However, we do not prefer this approach, because the equilibration condition cannot be satisfied exactly in practical computations due to round-off and tensor truncation errors and consequently, the upper bound on the error cannot be guaranteed. Moreover, the practical implementation of the equilibration condition is technically more involved, especially for high dimensional problems.

\section{Flux reconstruction}
\label{se:fluxrec}

Theorem~\ref{th:main} provides a guaranteed error bound for arbitrary flux $\btau \in \Hdiv$. However, in order to obtain a sharp error bound, the flux $\btau$ has to be chosen carefully. A natural idea is to minimize the error estimator $\eta^2(\btau)$ over a finite dimensional subspace $\bWh \subset \Hdiv$.
We employ the standard finite element technology and choose the Raviart--Thomas finite element space
$$
  \bWh = \{ \btau_h \in \Hdiv : \btau_h|_K \in \mathbf{RT}_1(K) \quad \forall K \in \cT_h \},
$$
where
$$
  \mathbf{RT}_1(K) = [ \mathbb{Q}_{2,1,\dots,1}(K), \mathbb{Q}_{1,2,\dots,1}(K), \dots, \mathbb{Q}_{1,1,\dots,2}(K)]^T
$$
is the local Raviart--Thomas space
\cite{RavTho1977}, \cite[sec. 2.4.4]{SolSegDol2004}
on the $d$-dimensional hyperrectangle $K$
and $\mathbb{Q}_{p_1,p_2,\dots,p_d}(K)$ stands for the space of polynomials in $K$ having degree at most $p_s$ in variable $x_s$, $s=1,2,\dots,d$.
Note that the fact that piecewise polynomial vector fields in $\bWh$ are all in $\Hdiv$ is equivalent to the continuity of their normal components over all hyperfaces in the mesh $\cT_h$.

Since the functional $\eta^2(\btau)$ is not quadratic, its minimization leads to a nonlinear problem. Therefore, we leave out the oscillation terms
and define the \emph{flux reconstruction} $\btau_h \in \bWh$ as the minimizer of the quadratic functional
\eqref{eq:sumetaKsq} over the Raviart--Thomas space $\bWh$.
The Euler--Lagrange equations corresponding to this minimization problem read
\begin{equation}
\label{eq:dualprob}
  \left(\tkappa^{-2} \ddiv \btau_h, \ddiv \bw_h\right) + (\btau_h , \bw_h)
  =
  \left( \tkappa^{-2} [ \Pi(\kappa^2 u_h) - \tkappa^2 u_h - \Pi f], \ddiv \bw_h \right)
\end{equation}
for all $\bw_h \in \bWh$.
Here, again, $(\cdot,\cdot)$ denote the $[L^2(\Omega)]^d$ and $L^2(\Omega)$ scalar products.
The problem of finding $\btau_h \in \bWh$ satisfying these equations is called the \emph{complementary problem}.
Note that the right-hand side of \eqref{eq:dualprob} is adjusted by using the divergence theorem.
Interestingly, if $\kappa$ is piecewise constant then projections $\Pi$ can be ommited on the right-hand side of \eqref{eq:dualprob}. If in addition, $\kappa$ is bounded away from zero in $\Omega$, and $\kappa_0 = 0$, then problem \eqref{eq:dualprob} is independent of $u_h$.

Let us note that the local efficiency for the local error indicators $\eta_K(\btau_h)$ is still open and out of the scope of this paper. However, a local efficiency result for a similar case, namely simplicial elements, piecewise constant $\kappa$, and a flux reconstruction computed by minimizing local analogies of the quadratic functional \eqref{eq:sumetaKsq} on patches of elements, is proved in \cite{AinVej:2018}.

Since the space $\bWh$ is finite dimensional, problem \eqref{eq:dualprob} is equivalent to a system of linear algebraic equations.
This system is solved by using low-rank tensor approximations as described in the following section.

\section{Low-rank tensor approximations of $u_h$ and $\btau_h$}
\label{se:tensor}

This section describes low-rank tensor approximations of the finite element solution $u_h$ and later also of the reconstructed flux $\btau_h$. The standard finite element method suffers from the curse of dimensionality: both the memory requirements and the computational time grow exponentially with $d$. If the dimension $d$ is much larger than $3$, as in a typical Schr\"odinger equation, then they become prohibitively large.
In this case, we approximate expansion coefficients representing $u_h$ and $\btau_h$ by suitable low-rank tensor decompositions.

\subsection{Cartesian grid and indexing}
The domain $\Omega = (a_1,b_1) \times \cdots \times (a_d, b_d)$ is a Cartesian product of intervals $(a_k, b_k)$, $k=1,\ldots,d$.
Within this domain, we introduce a Cartesian product grid,
$$
z(i) = \left(z_1(i_1), \ldots, z_d(i_d)\right), \qquad a_k = z_k(0) < \cdots < z_k(i_k) < \cdots < z_k(n_k) = b_k,
$$
for $k=1,\ldots,d$, where
$i_k$ are individual indices of $n_k+1$ nodes in the $k$-th direction,
and
\begin{equation}\label{eq:multiind}
 i = i_d + i_{d-1}(n_d+1)  + \cdots + i_1(n_d+1) \cdots (n_2+1), \quad i_k=0,\ldots,n_k,
\end{equation}
is the \emph{global} index of the point $z(i)$ in the whole Cartesian product grid.
Note that $i=0,\ldots, N-1$, where $N=(n_1+1) \cdots (n_d+1)$ is indeed the cardinality of the $d$-dimensional grid.
Further, we define $h_k(i_k) = z_k(i_k) - z_k(i_k-1)$ for $i_k = 1,2,\dots,n_k$.

Introducing
the finite elements
\begin{equation}
\label{eq:element}
K(i_1,\ldots,i_d) = [z_1(i_1-1),z_1(i_1)]\times \cdots \times [z_d(i_d-1), z_d(i_d)], \quad i_k=1,\ldots,n_k,
\end{equation}
we define the usual piecewise linear finite element space
$$
V_h = \{ v_h \in H^1_0(\Omega) : v_h|_{K(i_1,\ldots,i_d)} \in \mathbb{Q}_{1,1,\dots,1}(K(i_1,\ldots,i_d)),\ i_k=1,2,\dots,n_k,\ k=1,2,\dots,d \}
$$
and the finite element solution $u_h \in V_h$ of problem \eqref{eq:modpro} by the identity
\begin{equation}\label{eq:FEM}
(\bnabla u_h , \bnabla v_h) + (\kappa^2 u_h, v_h) = (f, v_h)
\quad\forall v_h \in V_h.
\end{equation}

Defining the standard univariate piecewise linear and continuous hat functions
$$
\varphi_{i_k}^{(k)}(x_k) = \left\{\begin{array}{ll}
\displaystyle \frac{x_k - z_k(i_k-1)}{h_k(i_k)}, & \quad\mbox{if } z_k(i_k-1)\le x_k \le z_k(i_k), \\
\displaystyle \frac{z_k(i_k+1) - x_k}{h_k(i_k+1)}, & \quad\mbox{if }  z_k(i_k)\le x_k \le z_k(i_k+1), \\
0, & \mbox{otherwise},
\end{array}\right.
$$
for $i_k = 1,2,\dots,n_k-1$, $k=1,2,\dots,d$, we express $u_h$ as
\begin{equation}\label{eq:hatd}
 u_h(x) = \sum_{i_1,\ldots,i_d=1}^{n_1-1,\ldots,n_d-1} \hat u(i) \varphi^{(1)}_{i_1}(x_1) \cdots \varphi^{(d)}_{i_d}(x_d),
\end{equation}
where $x = (x_1, x_2, \dots, x_d)$ and $\hat u \in \mathbb{R}^{(n_1+1)\cdots(n_d+1)}$ is a vector containing expansion coefficients of $u_h$ in the basis of $V_h$.
Note that the boundary entries of $\hat u$ (i.e. those corresponding to $i_k=0$ and $i_k=n_k$ for any $k$) are not used in \eqref{eq:hatd}.
For consistency, we set them to zero.

The Cartesian framework simplifies the construction of the weak form \eqref{eq:FEM}.
For example, for $\kappa = 0$, plugging \eqref{eq:hatd} into the finite element formulation \eqref{eq:FEM}, we obtain a structured linear system
$\hat A \hat u = \hat b$, where
\begin{equation}\label{eq:Alap}
 \hat A = L_1 \otimes M_2 \otimes \cdots \otimes M_d + \cdots + M_1 \otimes \cdots \otimes M_{d-1} \otimes L_d.
\end{equation}
Moreover, assuming a separable factorization
\begin{align*}
  f(x) &= f^{(1)}(x_1) f^{(2)}(x_2) \cdots f^{(d)}(x_d),
\end{align*}
we obtain a separable right-hand side
\begin{equation}\label{eq:bsep}
  \hat b = b_1 \otimes b_2 \otimes \cdots \otimes b_d.
\end{equation}
Here,
$$
(L_k)_{i_k,j_k} = \left(\frac{\rmd\varphi^{(k)}_{i_k}}{\rmd x_k}, \frac{\rmd\varphi^{(k)}_{j_k}}{\rmd x_k}\right), \quad (M_k)_{i_k,j_k} = \left(\varphi^{(k)}_{i_k}, \varphi^{(k)}_{j_k}\right), \quad \mbox{and} \quad(b_k)_{i_k} = (f^{(k)},\varphi^{(k)}_{i_k})
$$
are elements of the ``one-dimensional'' stiffness and mass matrices, and the load vector, respectively.
These scalar products are understood in the $L^2(a_k,b_k)$ sense, and
$i_k,j_k=1,\ldots,n_k-1$, $k=1,2,\dots,d$.
Moreover, entries corresponding to the boundary conditions are set to $(b_k)_{i_k} = (L_k)_{i_k,j_k} = (M_k)_{i_k,j_k} = 0$
whenever $i_k,j_k=0$ or $n_k$ and $i_k \neq j_k$, and $(L_k)_{i_k,i_k} = (M_k)_{i_k,i_k} = 1$ when $i_k=0$ or $n_k$.
Symbol $\otimes$ stands for the Kronecker product, defined for any matrices $A=\left[a_{i,j}\right]$ and $B$ as follows,
$$
A \otimes B = \begin{bmatrix}a_{1,1} B & \cdots & a_{1,m} B \\ \vdots & & \vdots \\ a_{n,1} B & \cdots & a_{n,m} B\end{bmatrix}.
$$

Although the number of entries of the matrix $\hat A$ and of the load vector $\hat b$
grows exponentially with $d$, the number of entries in ``one-dimensional'' factors $L_k$, $M_k$, and $b_k$ grows linearly.
Therefore, the idea of how to break the curse of dimensionality is to never compute Kronecker products and work directly with ``one-dimensional'' factors instead.
The following subsection briefly describes a more general low-rank decomposition for the tensors of expansion coefficients $\hat u$, load $\hat b$, and for the stiffness matrix $\hat A$.

\subsection{TT decomposition}

We employ the Tensor Train (TT) \cite{osel-tt-2011} low-rank tensor decomposition format.
The vector $\hat u$ is approximated by a vector $\tilde u$ (with elements $\tilde u(i)$) that admits the TT decomposition
\begin{equation}\label{eq:tt}
 \tilde u(i) = \sum_{\alpha_0,\ldots,\alpha_d=1}^{r_0,\ldots,r_d} u^{(1)}_{\alpha_0,\alpha_1}(i_1)  u^{(2)}_{\alpha_1,\alpha_2}(i_2)  \cdots  u^{(d)}_{\alpha_d-1,\alpha_d}(i_d).
\end{equation}
\begin{definition}\label{def:ttblocks}
The factors on the right-hand side of \eqref{eq:tt} are called \emph{TT blocks}, and can be seen in the following three equivalent forms representing the same data.
\begin{enumerate}
 \item All values $u^{(k)}_{\alpha_{k-1},\alpha_k}(i_k)$ for $\alpha_{k-1}=1,\ldots,r_{k-1}$, $\alpha_k=1,\ldots,r_k$ and $i_k=0,\ldots,n_{k}$ can be collected into a three-dimensional \emph{tensor} $u^{(k)} \in \mathbb{R}^{r_{k-1} \times (n_k+1) \times r_k}$ (hence the name tensor decompositions).
 \item For fixed $\alpha_{k-1},\alpha_k$, the \emph{slice} $u^{(k)}_{\alpha_{k-1},\alpha_k} \in \mathbb{R}^{n_k+1}$ is a (column) vector.
 This allows us to rewrite the TT decomposition \eqref{eq:tt} using Kronecker products,
\begin{equation}\label{eq:ttkron}
 \tilde u = \sum_{\alpha_0,\ldots,\alpha_d=1}^{r_0,\ldots,r_d} u^{(1)}_{\alpha_0,\alpha_1} \otimes u^{(2)}_{\alpha_1,\alpha_2} \otimes \cdots \otimes u^{(d)}_{\alpha_d-1,\alpha_d}.
\end{equation}
Note that this expansion coincides with \eqref{eq:bsep} for $r_0=\cdots=r_d=1$.
\item All values of a TT block can be collected into a \emph{vector} $\bar u^{(k)}(\zeta_k) = u^{(k)}_{\alpha_{k-1},\alpha_k}(i_k)$, $\bar u^{(k)} \in \mathbb{R}^{r_{k-1} (n_k+1)r_k}$, where $\zeta_k = \alpha_{k-1} + i_k r_{k-1} + (\alpha_k-1) (n_k+1) r_{k-1}$ is the global index comprised of $\alpha_{k-1},\alpha_k$ and $i_k$. This will allow us to formulate a linear system for entries of a chosen TT block in Section~\ref{sec:als}.
\end{enumerate}
\end{definition}
The auxiliary summation ranges $r_0,\ldots,r_d$ are called \emph{TT ranks}.
For consistency with the left-hand side, the border ranks are constrained to $r_0=r_d=1$, but the intermediate TT ranks can be larger than one,
and depend on the desired approximation error $\hat u - \tilde u$.
Assuming that all grid sizes and intermediate TT ranks are the same, $n_1=\cdots=n_d=n$ and $r_d=\cdots=r_{d-1}=r$,
we conclude that this so-called \emph{rank-$r$} TT decomposition contains $\mathcal{O}(dnr^2)$ unknowns.
If the TT rank $r$ is moderate, this can be much smaller than the original $(n+1)^d$ unknowns.

Many closed-form functions admit TT approximations (or even exact decompositions) of their coefficient tensors with low TT ranks \cite{osel-constr-2013,dk-qtt-tucker-2013}.
For example, consider the function $u(x) = \sin(\pi x_1)\cdots\sin(\pi x_d) \in H_0^1((0,1)^d)$ that can be approximated on the above Cartesian grid by
a rank-1 TT decomposition \eqref{eq:tt} with $u^{(k)}(i_k) = \sin(\pi i_k/n_k)$, $k=1,\ldots,d$.
Smooth functions can often be approximated by truncated series, where each term is low-rank, which gives a bound on the total rank \cite{tee-tensor-2003,uschmajew-approx-rate-2013}.
For example, the solution of the Poisson equation (i.e. problem \eqref{eq:modpro} with $\kappa=0$)
with a low-rank right-hand side can be approximated
by a tensor in the TT format with a relative error $\varepsilon$
and TT ranks bounded by $\mathcal{O}(\log^2 \varepsilon)$ \cite{grasedyck-kron-2004}.

In general, any tensor can be approximated by a TT decomposition using
$\mathcal{O}(dnr^2)$ entries from the tensor via cross interpolation algorithms \cite{ot-ttcross-2010,sav-qott-2014},
allowing $r$ to be large enough.
Of course, in practice we are interested in problems where $r$ can be taken small, e.g. independent of (or mildly dependent on) the dimension $d$ and the grid size $n$.

Structured matrices can also be represented in a TT format.
For example, one can prove \cite{khkaz-lap-2012} that the stiffness matrix \eqref{eq:Alap} can be represented as
\begin{equation}\label{eq:ttm}
 \hat A = \sum_{\beta_0,\ldots,\beta_d=1}^{R_0,\ldots,R_d} A^{(1)}_{\beta_0,\beta_1} \otimes \cdots \otimes A^{(d)}_{\beta_{d-1},\beta_d}
\end{equation}
with
$R_0 = R_d = 1$,
$R_1=\cdots=R_{d-1}=2$, and the following TT blocks:
$$
A^{(1)} = \left\{\begin{matrix}L_1 & M_1\end{matrix}\right\}, \quad
A^{(k)} = \left\{\begin{matrix}M_k & 0 \\ L_k & M_k \end{matrix}\right\}, \quad
A^{(d)} = \left\{\begin{matrix}M_d \\ L_d \end{matrix}\right\},
$$
where $k=2,\ldots,d-1$, the rows of the curly bracket matrices correspond to the rank index $\beta_{k-1}$, and the columns correspond to $\beta_k$ (e.g. $A^{(k)}_{1,1}=A^{(k)}_{2,2}=M_k$, $A^{(k)}_{2,1}=L_k$, $A^{(k)}_{1,2}=0$).
This representation clearly reduces the storage costs compared to the direct summation of Kronecker product terms in \eqref{eq:Alap}.

The product $\hat A \tilde u$ can be explicitly expressed as another TT decomposition with TT ranks $r_0R_0,\ldots,r_dR_d$ \cite{osel-tt-2011},
without expanding the Kronecker products.
Moreover, the result can be \emph{re-approximated} up to quasi-optimal ranks for the given accuracy using the singular value decomposition (SVD) with the cost proportional to $d$.

\subsection{Alternating scheme for solving linear equations}\label{sec:als}
First attempts to solve systems of linear algebraic equations with decomposed tensors were based on traditional iterative methods such as Richardson, CG, and GMRES
with matrix-vector products and other operations implemented on TT blocks followed by the rank truncation \cite{tobkress-param-2011,KhSch-Galerkin-SPDE-2011,balgras-htgmres-2013,dc-ttgmres-2013}.
This approach applies to any low-rank tensor decomposition format.
However, in realistic problems the TT ranks of intermediate (e.g. Krylov) vectors grow rapidly,
exceeding the optimal ranks of the solution significantly unless a good preconditioner is used.

A more robust technique that is commonly adopted nowadays is the Alternating Linear Scheme (ALS) \cite{holtz-ALS-DMRG-2012} and its extensions
\cite{DoOs-dmrg-solve-2011,ds-amen-2014,kressner-evamen-2014}.
These methods project the equations onto bases constructed from the TT decomposition of the solution itself
and therefore avoid decompositions of auxiliary vectors with high TT ranks.
We emphasize that the TT form \eqref{eq:tt} of the solution is crucial for this technique.

Let us start with the standard ALS algorithm, suitable for the primal problem \eqref{eq:FEM}.
The TT decomposition \eqref{eq:tt} can be rewritten as a linear map from the elements of a $k$-th TT block to the elements of $\tilde u$.
Given \eqref{eq:tt}, let us define partial TT decompositions
\begin{align}\label{eq:partialTTdecl}
U^{(<k)}_{\alpha_{k-1}} &= \sum_{\alpha_{0},\ldots,\alpha_{k-2}=1}^{r_{0},\ldots,r_{k-2}} u^{(1)}_{\alpha_{0},\alpha_1} \otimes \cdots \otimes u^{(k-1)}_{\alpha_{k-2},\alpha_{k-1}} &\in\mathbb{R}^{(n_1+1)\cdots(n_{k-1}+1)} \\
\label{eq:partialTTdecr}
U^{(>k)}_{\alpha_{k}} &= \sum_{\alpha_{k+1},\ldots,\alpha_{d}=1}^{r_{k+1},\ldots,r_{d}} u^{(k+1)}_{\alpha_{k},\alpha_{k+1}} \otimes \cdots \otimes u^{(d)}_{\alpha_{d-1},\alpha_{d}}   &\in\mathbb{R}^{(n_{k+1}+1)\cdots(n_d+1)}
\end{align}
for any $1\le k \le d$ and fixed $\alpha_{k-1},\alpha_k$.
These vectors for different values of $\alpha_{k-1},\alpha_k$ can be collected into a $\prod_{\ell=1}^{k-1}(n_{\ell}+1) \times r_{k-1}$ matrix $U^{(<k)}$ and a $r_{k} \times \prod_{\ell={k+1}}^{d}(n_{\ell}+1)$ matrix $U^{(>k)}$.
Moreover, we set $U^{(<1)} = U^{(>d)} = 1$ for uniformity of notation in what follows.
Now we introduce the so-called \emph{frame} matrix by replacing $u^{(k)}$ in \eqref{eq:tt} by the identity matrix $I$ of size $n_k+1$:
\begin{equation}\label{eq:frame}
 U_{\neq k} = U^{(<k)} \otimes I \otimes \left(U^{(>k)}\right)^\top \in \mathbb{R}^{\prod_{\ell=1}^{d}(n_{\ell}+1) \times (r_{k-1}(n_k+1)r_k)}.
\end{equation}
Using the vector form notation $\bar u^{(k)}$ for the $k$-th TT block (see Def. \ref{def:ttblocks}.3),
it is then easy to prove \cite{holtz-ALS-DMRG-2012} that the TT decomposition \eqref{eq:tt} is equivalent to a linear map
$$
\tilde u = U_{\neq k} \bar u^{(k)} \quad \mbox{for each} \quad k=1,\ldots,d.
$$

If we plug this decomposition into the linear system $\hat A \hat u = \hat f$ (replacing $\hat u$ by $\tilde u$),
we obtain an overdetermined linear system for $\bar u^{(k)}$ (and hence for elements of $u^{(k)}$).
The simplest way to resolve it
is to project the equation onto the same frame matrix $U_{\neq k}$,
\begin{equation}\label{eq:localsys}
 \left(U_{\neq k}^\top \hat A U_{\neq k}\right) \bar u^{(k)} = \left(U_{\neq k}^\top \hat f\right).
\end{equation}
The ALS algorithm now iterates over $k=1,\ldots,d$ (hence the name alternating), solving the reduced system \eqref{eq:localsys} of size $r_{k-1} (n_k+1) r_k$ in each step, see the summary in Algorithm~\ref{alg:als}.
\begin{algorithm}[htb]
\caption{ALS algorithm for the primal (scalar) linear system}
\label{alg:als}
\begin{algorithmic}[1]
 \Require A matrix in the TT format \eqref{eq:ttm}, an initial guess $\tilde u$ in the TT format \eqref{eq:tt}, a right-hand side $\hat b$ in a TT format equivalent to \eqref{eq:tt}, a relative stopping tolerance $\delta_p>0$.
 \State Initialise $\tilde u_{prev}=0$.
 \While{$\|\tilde u - \tilde u_{prev}\|_2>\delta_p \|\tilde u\|_2$}
   \State Copy $\tilde u_{prev} = \tilde u$.
   \For{$k=1,2,\ldots,d,d-1,\ldots,1$}
     \State Assemble and solve \eqref{eq:localsys}, using \eqref{eq:frame} and \eqref{eq:partialTTdecl}--\eqref{eq:partialTTdecr}.
     \State Overwrite all elements $u^{(k)}_{\alpha_{k-1},\alpha_k}(i_k) = \bar u^{(k)}(\zeta_k)$ according to Def. \ref{def:ttblocks}.3.
   \EndFor
 \EndWhile
\end{algorithmic}
\end{algorithm}

If the matrix $\hat A$ is symmetric positive definite then the projected system \eqref{eq:localsys}
can be related to an optimization problem.
Consequently, the ALS algorithm can be related to the nonlinear block Gauss--Seidel method and the local convergence can be proved \cite{ushmaev-tt-2013}.

For the numerical efficiency one can notice that the reduced matrix and right-hand side in \eqref{eq:localsys}
can be constructed from the TT blocks of $\hat A, \hat u$ and $\hat f$ without ever expanding the Kronecker products.
In a sequential iteration over $k$ as described in Alg.~\ref{alg:als},
partial projections can be cached such that the cost of each ALS step becomes independent of $d$ \cite{holtz-ALS-DMRG-2012,DoOs-dmrg-solve-2011}.
The TT blocks can be \emph{enriched} with auxiliary vectors, such as approximate residuals \cite{ds-amen-2014},
which provides a mechanism for increasing TT ranks and their adaptation to the desired accuracy.

\subsection{TT decomposition for the complementary problem}

Since the complementary solution $\btau_h \in \bW_h$ is a vector field with $d$ components, the corresponding linear operator in \eqref{eq:dualprob} has a $d \times d$ block structure.
Therefore it is important to adopt a particular TT decomposition for expansion coefficients of $\btau_h$.
This specific structure will be useful for the
tailored iterative solver described in Subsection \ref{se:alstau}.
As a by-product, we obtain a low-rank algorithm which might be efficient for the solution of more general equations as well.

The general idea for the TT decomposition of $\btau_h$ is similar to the decomposition of $u_h$.
However, the construction of the Cartesian product Raviart--Thomas space $\mathbf{RT}_1$ is more complicated.
The $s$-th component $\tau_{h,s}(x)$ of $\btau_h(x)$, $s=1,\ldots,d$, is a piecewise polynomial function
that is piecewise quadratic and continuous in the $s$-th variable and piecewise linear and discontinuous in all the other variables.
Let $\hat\varphi^{(k)}_{j_k}(x_k)$, $j_k=1,2,\dots,2n_k$, denote piecewise linear and discontinuous functions defined as
\begin{align*}
 \hat\varphi^{(k)}_{2\ell_k-1}(x_k) &=
   \left\{ \begin{array}{rl}
     \dfrac{z_k(\ell_k)-x_k}{h_k(\ell_k)}, &\quad
     \mbox{for } x_k \in \left[z_k(\ell_k-1),~z_k(\ell_k)\right],\\
     0, &\quad\mbox{elsewhere,}
   \end{array}\right.
\\
 \hat\varphi^{(k)}_{2\ell_k}(x_k) &=
   \left\{ \begin{array}{rl}
     \dfrac{x_k-z_k(\ell_k-1)}{h_k(\ell_k)}, &\quad
     \mbox{for } x_k \in \left[z_k(\ell_k-1),~z_k(\ell_k)\right], \\
     0, &\quad\mbox{elsewhere,}
         \qquad\qquad\qquad\qquad \ell_k = 1,2,\dots,n_k.
   \end{array}\right.
\end{align*}
Similarly, let $\phi^{(k)}_{i_k}(x_k)$, $i_k = 0,1,\dots,2n_k$, $k=1,2,\dots,d$, be the usual piecewise quadratic and continuous ``one-dimensional'' basis functions.
Then components of the vector field $\btau_h = (\tau_{h,1},\ldots,\tau_{h,d}) \in \bW_h$ can be expanded in these basis functions as follows
\begin{multline}\label{eq:q2d}
  \tau_{h,s}(x) = \sum_{\substack{j_1,\ldots,j_{s-1},\\ j_{s+1},\ldots,j_d=1}}^{2n_1,\ldots,2n_d} \sum_{i_s=0}^{2n_s} \hat\tau_s(j)
  \cdot \hat\varphi^{(1)}_{j_1}(x_1) \cdots \hat\varphi^{(s-1)}_{j_{s-1}}(x_{s-1}) \cdot \phi^{(s)}_{i_s}(x_s) \cdot \hat\varphi^{(s+1)}_{j_{s+1}}(x_{s+1}) \cdots \hat\varphi^{(d)}_{j_d}(x_d),
\end{multline}
where the global index for the vector $\hat\tau_s \in \mathbb{R}^{(2n_1)\cdots (2n_s+1) \cdots (2n_d)}$ is defined similarly to \eqref{eq:multiind} as follows,
\begin{equation}\label{eq:multiind_tau}
j = j_d  + \cdots + i_s (2n_{s+1})\cdots (2n_d) + \cdots + (j_1-1) (2n_2)\cdots (2n_{s-1}) (2 n_s+1) (2n_{s+1})\cdots (2n_d).
\end{equation}
This ansatz enables us to express the complementary problem \eqref{eq:dualprob} in the following block structure:
\begin{equation}\label{eq:dualblock}
\begin{bmatrix}
 B_{1,1} & \cdots & B_{1,d} \\
 \vdots & \cdots & \vdots \\
 B_{d,1} & \cdots & B_{d,d}
\end{bmatrix}
\begin{bmatrix}
\hat\tau_1 \\ \vdots \\ \hat\tau_d
\end{bmatrix}
=
\begin{bmatrix}
  \hat g_1 \\ \vdots \\ \hat g_d
\end{bmatrix},
\end{equation}
where
\begin{align}
\label{eq:Bss}
(B_{s,s})_{j,j'} &=  (\tkappa^{-2} \partial_s \psi^{(s)}_j,\partial_s \psi^{(s)}_{j'}) + (\psi^{(s)}_j,\psi^{(s)}_{j'}), \quad s=1,\ldots,d,
\\ \nonumber
(B_{s,k})_{j,j'} &= (\tkappa^{-2} \partial_s \psi^{(s)}_j,\partial_k \psi^{(k)}_{j'}),  \qquad s \neq k,
\\ \nonumber
(\hat g_s)_j &= (\tkappa^{-2}[\Pi(\kappa^2 u_h) - \tkappa^2 u_h - \Pi f], \partial_s \psi^{(s)}_j),
\quad
\\ \nonumber
\psi^{(s)}_j(x) &= \hat\varphi^{(1)}_{j_1}(x_1)\cdots\hat\varphi^{(s-1)}_{j_{s-1}}(x_{s-1}) \phi^{(s)}_{i_s}(x_s)  \hat\varphi^{(s+1)}_{j_{s+1}}(x_{s+1})\cdots\hat\varphi^{(d)}_{j_{d}}(x_d),
\end{align}
$\partial_s$ stands for the partial derivative $\partial/\partial x_s$,
and global indices $j,j'=1,\ldots,(2n_1)\cdots (2n_s+1) \cdots (2n_d)$ are linked to $j_1,\ldots,j_{s-1},i_s,j_{s+1},\ldots,j_d$ in accordance with \eqref{eq:multiind_tau}.

Since all blocks $B_{s,k}$ are nonzero, the components of
the vector $\hat\tau = [\hat\tau_1^\top,\dots,\hat\tau_d^\top]^\top$
are coupled through \eqref{eq:dualblock}, and they should be approximated in the same TT decomposition,
in order to apply the ALS technique.
This requires all components to have same dimensions.
Therefore, we expand the ranges of the summation indices $j_1,\ldots,j_{s-1},j_{s+1},\ldots,j_d$ in \eqref{eq:q2d} to start from zero, expand the
vectors $\hat\tau_s$ by zeros, and the $\mathbf{RT}_1$ space by any dummy functions accordingly.
Under this convention, we can write the finite element solution as
\begin{multline}
\label{eq:tauhsx}
  \tau_{h,s}(x) = \sum_{i_1,\ldots,i_d=0}^{2n_1, \ldots 2n_d} \hat\tau_s(i) \cdot \hat\varphi^{(1)}_{i_1}(x_1) \cdots \hat\varphi^{(s-1)}_{i_{s-1}}(x_{s-1}) \cdot \phi^{(s)}_{i_s}(x_s) \cdot \hat\varphi^{(s+1)}_{i_{s+1}}(x_{s+1}) \cdots \hat\varphi^{(d)}_{i_d}(x_d),
\end{multline}
with the global index
\begin{equation}\label{eq:multiind_comp}
i = i_d + i_{d-1} (2n_d+1) + \cdots + i_1 (2n_2+1) \cdots (2n_d+1).
\end{equation}
Consequently, all blocks $B_{s,k}$, $s,k = 1,2,\dots,d$, are of size
$\prod_{\ell=1}^d (2 n_\ell + 1) \times \prod_{\ell=1}^d (2 n_\ell + 1)$.

\subsection{Block ALS for the complementary problem}
\label{se:alstau}

In principle, we could consider the component index $s$ as another independent variable, stack $\hat \tau_s$ vertically into a long vector $\hat\tau$, approximate this vector in a TT decomposition with $d+1$ blocks,
and apply the standard ALS, Algorithm~\ref{alg:als}, to the block linear system \eqref{eq:dualblock}.
However, due to the special meaning of the index $s$, this ALS algorithm
might be inefficient, as we demonstrate by the numerical experiments in Section~\ref{se:numex}.
Instead, we incorporate the component index $s$ into the TT block which is being evaluated in the current ALS step,
using the so-called block TT format~\cite{dkos-eigb-2014}.

The block TT approximation $\tilde\tau_s \approx \hat\tau_s$ is defined as follows,
\begin{equation}\label{eq:btt}
\tilde\tau_s = \sum_{\alpha_0,\ldots,\alpha_d=1}^{r_0,\ldots,r_d} \tau^{(1)}_{\alpha_0,\alpha_1} \otimes \cdots \otimes \tau^{(k-1)}_{\alpha_{k-2},\alpha_{k-1}} \otimes t^{(k)}_{\alpha_{k-1},\alpha_k,s} \otimes \tau^{(k+1)}_{\alpha_{k},\alpha_{k+1}} \otimes \cdots \otimes \tau^{(d)}_{\alpha_{d-1},\alpha_{d}},
\end{equation}
where $s$ can appear in any $k$-th TT block for $k=1,\ldots,d$,
which is denoted by $t^{(k)}$ to distinguish it from the other TT blocks $\tau^{(k)}$.
Again, for fixed $\alpha_{\ell}$ and $s$, all factors in the right-hand side are vectors, e.g. $\tau^{(\ell)}_{\alpha_{\ell-1},\alpha_\ell} \in \mathbb{R}^{2n_\ell+1}$ and $t^{(k)}_{\alpha_{k-1},\alpha_k,s} \in \mathbb{R}^{2n_k+1}$.
The benefit of this block TT decomposition for the ALS method stems from the form of the frame matrix.
Similarly to \eqref{eq:partialTTdecl}--\eqref{eq:partialTTdecr}, we introduce
\begin{align}\label{eq:blockPartL}
T^{(<k)}_{\alpha_{k-1}} &= \sum_{\alpha_{0},\ldots,\alpha_{k-2}=1}^{r_{0},\ldots,r_{k-2}} \tau^{(1)}_{\alpha_{0},\alpha_1} \otimes \cdots \otimes \tau^{(k-1)}_{\alpha_{k-2},\alpha_{k-1}} &\in\mathbb{R}^{(2n_1+1)\cdots(2n_{k-1}+1)} \\
\label{eq:blockPartR}
T^{(>k)}_{\alpha_{k}} &= \sum_{\alpha_{k+1},\ldots,\alpha_{d}=1}^{r_{k+1},\ldots,r_{d}} \tau^{(k+1)}_{\alpha_{k},\alpha_{k+1}} \otimes \cdots \otimes \tau^{(d)}_{\alpha_{d-1},\alpha_{d}}   &\in\mathbb{R}^{(2n_{k+1}+1)\cdots(2n_d+1)},
\end{align}
and notice that the $k$-th frame matrix
\begin{equation}\label{eq:frameblock}
T_{\neq k} = T^{(<k)} \otimes I \otimes \left(T^{(>k)}\right)^\top,
\end{equation}
cf. \eqref{eq:frame}, consists
of TT blocks without the component index $s$.
This allows us to reduce the block matrix in \eqref{eq:dualblock} component by component, preserving and exploiting its block structure
for efficient solution of the reduced problem.

Specifically, the \emph{block ALS} method \cite{bdos-sb-2016} computes the block TT approximation \eqref{eq:btt} of $\hat\tau$ by iterating over $k=1,\ldots,d$ and
solving in each step the reduced system
\begin{equation}\label{eq:locblock}
\begin{bmatrix}
 T_{\neq k}^\top B_{1,1} T_{\neq k} & \cdots & T_{\neq k}^\top B_{1,d}T_{\neq k} \\
 \vdots & \cdots & \vdots \\
 T_{\neq k}^\top B_{d,1}T_{\neq k} & \cdots & T_{\neq k}^\top B_{d,d} T_{\neq k}
\end{bmatrix}
\begin{bmatrix}
 \bar t^{(k)}_1 \\ \vdots \\ \bar t^{(k)}_d
\end{bmatrix}
=
\begin{bmatrix}
  T_{\neq k}^\top \hat g_1 \\ \vdots \\ T_{\neq k}^\top \hat g_d
\end{bmatrix}
\end{equation}
of size $r_{k-1}(2n_k+1)r_k d$.
Here we introduce a vector notation similar to Def.~\ref{def:ttblocks}.3,
$$
\bar t^{(k)}_s(\zeta_k) = t^{(k)}_{\alpha_{k-1},\alpha_k,s}(i_k), \quad \mbox{for}\quad \zeta_k = \alpha_{k-1} + i_k r_{k-1} + (\alpha_k-1)r_{k-1}(2 n_k+1).
$$
Note that when we switch to the next step ($k-1$ or $k+1$), the index $s$ in the block TT format \eqref{eq:btt} needs to be moved to the corresponding ($k-1$ or $k+1$) TT block,
such that the new frame matrix remains independent of $s$.
This operation can be performed using the singular value decomposition (SVD).

For example, suppose we need to move the index $s$ from the $k$-th TT block to the $(k+1)$-th TT block.
First, we stretch $t^{(k)}$ into a $r_{k-1}(2n_k+1) \times d r_k$ matrix by grouping indices $\alpha_{k-1}$ and $i_k$ into a new row index, and
$s$ and $\alpha_k$ into a new column index,
\begin{equation}\label{eq:tk-reshape}
T^{(k)}(\alpha_{k-1},i_k;~s, \alpha_k) = t^{(k)}_{\alpha_{k-1},\alpha_k,s}(i_k).
\end{equation}
Computing SVD of $T^{(k)}$ and truncating the singular values below the desired error threshold, we obtain
\begin{equation}\label{eq:tk-svd}
T^{(k)}(\alpha_{k-1},i_k;~s, \alpha_k) \approx \sum_{\alpha_k'=1}^{r_k'} P(\alpha_{k-1},i_k;~\alpha_k') \Sigma(\alpha_k',\alpha_k') Q(\alpha_k';~s,\alpha_k),
\end{equation}
where $P$ and $Q$ are matrices of left and right singular vectors, respectively, and $\Sigma(\alpha_k',\alpha_k')$
are singular values.
In the new block TT decomposition of $\tilde\tau_s$ the TT block $t^{(k)}$ in \eqref{eq:btt} is replaced by
\begin{equation}\label{eq:new-block-tauk}
\tau^{(k)}_{\alpha_{k-1},\alpha_k'}(i_k) = P(\alpha_{k-1},i_k;~\alpha_k')
\end{equation}
and the TT block $\tau^{(k+1)}$ is replaced by
\begin{equation}\label{eq:new-block-tk}
t^{(k+1)}_{\alpha_k',\alpha_{k+1},s}(i_{k+1}) = \sum_{\alpha_k=1}^{r_k} \Sigma(\alpha_k',\alpha_k') Q(\alpha_k';~s,\alpha_k) \tau^{(k+1)}_{\alpha_k,\alpha_{k+1}}(i_{k+1}).
\end{equation}
Notice that the new block TT decomposition has the same form as \eqref{eq:btt} except that $s$ is located in the $(k+1)$-th block, and the $k$-th TT rank is $r_k'$.
This new rank may differ from $r_k$ in general, but in practical computations the difference is often insignificant.
This procedure can be continued or reversed in order to place $s$ into any desired block, as described in Algorithm~\ref{alg:als-block}.

\begin{algorithm}[htb]
\caption{Block ALS algorithm for the complementary linear system}
\label{alg:als-block}
\begin{algorithmic}[1]
 \Require Matrices $B_{s,k}$ in the TT formats \eqref{eq:ttm}, initial guesses $\tilde \tau_s$ in the TT format \eqref{eq:btt}, right-hand sides $\hat g_s$ in a TT format equivalent to \eqref{eq:tt}, a relative approximation tolerance $\delta_c>0$.
 \State Initialise $\tilde \tau^{prev}=0$.
 \While{$\|\tilde \tau - \tilde \tau^{prev}\|_2>\delta_c \|\tilde \tau\|_2$}
   \State Copy $\tilde \tau^{prev} = \tilde \tau$.
   \For{$k=1,2,\ldots,d,d-1,\ldots,1$}
     \State Assemble and solve \eqref{eq:locblock}, using \eqref{eq:frameblock} and \eqref{eq:blockPartL}--\eqref{eq:blockPartR}.
     \State Compute SVD as in \eqref{eq:tk-reshape}--\eqref{eq:tk-svd}, choose $r_k'$ such that $\|T^{(k)}-P\Sigma Q\|_F \le \delta_c \|T^{(k)}\|_F$.
     \If {$k$ increases in the next step}
       \State Overwrite $\tau^{(k)}$ and $t^{(k+1)}$ as shown in \eqref{eq:new-block-tauk}--\eqref{eq:new-block-tk}.
     \Else
       \State Overwrite $\tau^{(k)}$ and $t^{(k-1)}$ similarly, copying $Q$ into $\tau^{(k)}$ and $P\Sigma$ into $t^{(k-1)}$.
     \EndIf
   \EndFor
 \EndWhile
\end{algorithmic}
\end{algorithm}

The size of \eqref{eq:locblock} might still be rather large for the direct solution.
Typical values of TT ranks range from $10$ to $50$, and the grid sizes $n_k$ can range from tens to hundreds.
However, the matrix inherits the TT decomposition, and hence a fast matrix-vector product can be implemented for iterative solvers \cite{DoOs-dmrg-solve-2011}.
Specifically, we use GMRES with a block Jacobi preconditioner with respect to the rank indices $\alpha_{k-1},\alpha_k$ in the solution.
Recall that $\bar t^{(k)}_s$ is enumerated by three independent indices $\alpha_{k-1},\alpha_k,i_k$,
and hence each matrix $\hat B_{s,l} = T_{\neq k}^\top B_{s,l}T_{\neq k}$ can in turn be seen as a three-level block matrix,
$\hat B_{s,l} = [\hat B_{s,l}(\alpha_{k-1},\alpha_k,i_k; ~\alpha_{k-1}',\alpha_k',i_k')]$.
The preconditioner is constructed by extracting the diagonal with respect to the first two levels, i.e.
$\tilde B_{s,l} = [\hat B_{s,l}(\alpha_{k-1},\alpha_k,i_k; ~\alpha_{k-1},\alpha_k,i_k') \delta(\alpha_{k-1},\alpha_{k-1}') \delta(\alpha_{k},\alpha_{k}')].$

In order to construct \eqref{eq:locblock} efficiently, we need to decompose the components of the matrix $B_{s,l}$ and the right-hand side $\hat g_s$ in the TT formats \eqref{eq:ttm} and \eqref{eq:tt}.
This step relies crucially on the approximation of $\tkappa^{-2}(x)$, $f$, and other functions.
We collocate them on a special quadrature grid using the TT-Cross method \cite{ot-ttcross-2010}.
This method together with the Cartesian structure of the finite elements enables us to easily construct
the linear and bilinear forms in TT formats for both the primal and the complementary problem.
The TT-Cross method is always run with the same relative approximation threshold $\delta_c$ as the one used in Algorithm~\ref{alg:als-block}.

\subsection{Tensorized Gauss--Legendre quadrature}\label{sec:quad}
Error indicators \eqref{eq:etaK}, oscillation terms \eqref{eq:oscK}, as well as entries of involved matrices and vectors are given as integrals over finite elements. These integrals are computed by tensorized Gauss--Legendre quadrature.
Utilizing the Cartesian structure of the finite elements, we use one-dimensional Gauss--Legendre quadrature
in each interval $[z_k(i_k-1), z_k(i_k)]$, constituting the element \eqref{eq:element}.
Assuming that $m$ quadrature nodes are introduced in each interval,
we end up with the total of $m n_k$ quadrature nodes $a_k \le y_k(1) < \cdots < y_k(mn_k) \le b_k$ for the $k$-th direction,
and with the corresponding quadrature weights $w_k(\ell_k)$, $\ell_k=1,\ldots,mn_k$.

Now, all integrated functions in weak formulations \eqref{eq:weakf}, \eqref{eq:dualprob} and definitions \eqref{eq:etaK}, \eqref{eq:oscK} are approximated by vectors
of collocation values at $y(\ell) = (y_1(\ell_1),\ldots,y_d(\ell_d))$, where the global index is introduced as previously,
\begin{equation}\label{eq:multiind_quad}
\ell = \ell_d + (\ell_{d-1}-1) (mn_d) + \cdots (\ell_1-1) (mn_2)\cdots (mn_d).
\end{equation}
For example, to evaluate the first scalar product in \eqref{eq:Bss}, we introduce a vector $\hat\sigma \in \mathbb{R}^{(mn_1)\cdots (mn_d)}$ with elements
$$
\hat \sigma(\ell) = \tkappa^{-2}(y_1(\ell_1),\ldots,y_d(\ell_d)),
$$
being the values of $\tkappa^{-2}$ at the quadrature nodes,
and approximate it by the TT-Cross algorithm as
$$
\hat \sigma \approx \tilde\sigma = \sum_{\alpha_0,\ldots,\alpha_d=1}^{r_0,\ldots,r_d} \sigma^{(1)}_{\alpha_0,\alpha_1} \otimes \cdots \otimes \sigma^{(d)}_{\alpha_{d-1},\alpha_d}.
$$
This enables us to efficiently compute
the matrix elements. For example, for $s \neq k$,
\begin{equation}\label{eq:W3}
(B_{s,k})_{j,j'} = (\tkappa^{-2} \partial_s \psi^{(s)}_j, \partial_k \psi^{(k)}_{j'}) \approx \sum_{\ell_1,\ldots,\ell_d=1}^{mn_1,\ldots,mn_d} w_1(\ell_1) \cdots w_d(\ell_d) \tilde\sigma(\ell) \partial_s \psi^{(s)}_j(y(\ell)) \partial_k \psi^{(k)}_{j'} (y(\ell)),
\end{equation}
where $\ell$ is the total index according to \eqref{eq:multiind_quad}, and $j,j'$ are total indices according to \eqref{eq:multiind_comp}.
Due to the product structure of $\tilde\sigma$ and $\psi^{(s)}$,
all terms in \eqref{eq:W3} corresponding to a fixed direction can be grouped together, and the sum over $\ell$ can be implemented as a product of
sums over individual $\ell_k$ with the total computational cost being linear in $d$.
Moreover, the loops over $j$ and $j'$ can be factorized as well, such that the whole matrix on the left-hand side of \eqref{eq:W3} can be written in the matrix TT format \eqref{eq:ttm} block by block, e.g. $\sum_{\ell_q} w_q(\ell_q) \sigma^{(q)}_{\alpha_{q-1},\alpha_q}(\ell_q) \hat\varphi^{(q)}(y_q(\ell_q)) \hat\varphi^{(q)} (y_q(\ell_s))$ becomes the $q$-th TT block ($q\neq s \neq k$), and so on.
Since the elements have finite support, the total cost of this operation is $\mathcal{O}(d n r^2)$,
where $r$ is the maximal TT rank of $\tilde\sigma$.

\subsection{Evaluation of the error estimator}
\label{se:evalee}

Importantly, Theorem~\ref{th:main} holds for arbitrary $u_h \in H^1_0(\Omega)$ and arbitrary $\btau_h \in \Hdiv$.
Therefore, the upper bound property \eqref{eq:upperbound} holds even if $u_h$ and $\btau_h$ are polluted by various errors,
including iteration errors in the ALS algorithm, SVD truncation errors,
and quadrature errors in matrix elements.
However, the upper bound \eqref{eq:upperbound} needs to be evaluated exactly.
Given the approximate solution $u_h$ and the complementary solution $\btau_h$ in the TT format,
it might be difficult to evaluate $\eta(\btau_h)$ in \eqref{eq:upperbound} without truncation errors.
Specifically, if we are interested in the \emph{elementwise} error estimation, we run the TT-Cross to approximate the terms in~\eqref{eq:etaK},
then we approximate the pointwise square root of $\eta_K^2(\btau)$, and finally we use the TT-Cross again to compute $\eta(\btau)$ from~\eqref{eq:eta}.
If we choose the TT-Cross approximation tolerance much smaller than that used for the ALS solvers then this approximation of~\eqref{eq:upperbound} will likely remain an upper bound on the error. However, it is not guaranteed and, moreover,
it might result in large TT ranks of some of intermediate quantities.

On the other hand,
the tensor approximation errors can be avoided if
$\eta(\btau)$ is replaced by
a slightly larger quantity
$\tilde\eta(\btau)$ given by \eqref{eq:tildeeta}--\eqref{eq:sumetaKsq},
because $\tilde\eta(\btau)$
has a
favourable separable structure.
After solving primal and complementary problems, we are given TT approximations of
$\btau_h$, $\mathrm{div}~\btau_h$, $\bnabla u_h$, $\tkappa^{-1}$ and $\Pi r$.
We interpolate them at quadrature nodes $y_k(\ell_k)$
and multiply the error terms by square roots of the quadrature weights, e.g.
\begin{equation}
\begin{split}
\tilde\eta_1(\ell) & = \sqrt{w_1(\ell_1)\cdots w_d(\ell_d)} \left(\btau_h(y(\ell)) - \bnabla u_h(y(\ell)) \right), \\
\tilde\eta_2(\ell) & = \sqrt{w_1(\ell_1)\cdots w_d(\ell_d)} \tkappa^{-1}(y(\ell)) \left(\Pi r(y(\ell)) + \mathrm{div}~\btau_h(y(\ell)) \right),
\end{split}
\label{eq:tildeeta_quad}
\end{equation}
$\ell_k = 1,2,\dots,m n_k$, $k=1,2,\dots,d$, and $\ell$ is according to \eqref{eq:multiind_quad}.
These vectors depend \emph{polylinearly} on $\btau_h,u_h,\tilde\kappa^{-1}$ and $r$, and thus their TT decompositions can be constructed \emph{exactly},
followed by taking exact 2-norms \cite{osel-tt-2011} of the TT formats,
$$
\sum_{K \in \mathcal{T}_h} \eta_K^2(\btau_h) = \|\tilde\eta_1\|_2^2 + \|\tilde\eta_2\|_2^2,
$$
where we recall that $\eta_K^2(\btau_h)$ satisfy~\eqref{eq:sumetaKsq}.
An upper bound on the oscillation term can be easily computed by using~\eqref{eq:sumoscKsq}, where $\|r - \Pi r\|$ and $\min\left\{h_K \pi^{-1}, \max_K \kappa^{-1}\right\}$ are computed separately.

Hence, the computed estimator $\tilde\eta(\btau)$ is polluted by the round-off and quadrature errors due to $\kappa$ and $f$, only. In special cases, for example for constant $\kappa$ and polynomial $f$, the quadrature errors vanish and the computed bound is guaranteed up to the round-off errors.

\section{Numerical examples}
\label{se:numex}

\subsection{Constant reaction coefficient}
As an example, we consider the reaction-diffusion problem \eqref{eq:modpro} in the unit cube $\Omega = (0,1)^d$
with the constant reaction coefficient $\kappa^2$, and the right-hand side
$$
f(x) = 8\sum_{k=1}^{d} \prod_{\substack{i=1\\i\neq k}}^{d} \left(1-4(x_i-0.5)^2\right) + \kappa^2 \prod_{k=1}^{d} \left(1-4(x_i-0.5)^2\right).
$$
chosen such that the exact solution is
$$
u(x) = \prod_{k=1}^{d} \left(1-4(x_k-0.5)^2\right).
$$
We are to investigate how the accuracy of the proposed error estimator depends on the number of grid points
$n = n_1 = \cdots = n_d$,
the dimension $d$, and the reaction coefficient $\kappa^2$.

For the computation of matrix elements and error estimators as described in Section \ref{sec:quad},
we use $m=4$ Gauss--Legendre points in each interval of the grid.
Since $\kappa$ is constant and $f$ is a polynomial of degree at most $2$ in each variable, this quadrature rule is exact.
The value of $m=4$ is used in order to define the same rule throughout all experiments in the paper.

For comparison, we present error estimators
$\eta(\btau_h)$ and $\tilde\eta(\btau_h)$ given by \eqref{eq:eta} and \eqref{eq:tildeeta}
as well as the corresponding guaranteed error bounds given by the right-hand side of \eqref{eq:upperbound}:
\begin{align*}
E(\btau_h) &=   \eta(\btau_h) + \kappa_0\CP \left(
    \sum_{K\in\cT_h} \norm{\tkappa^{-1} ( \Pi_K r + \ddiv \btau_h) }_K^2
  \right)^{1/2},
\\
\tilde E(\btau_h) &= \tilde\eta(\btau_h) + \kappa_0\CP \left(
    \sum_{K\in\cT_h} \norm{\tkappa^{-1} ( \Pi_K r + \ddiv \btau_h) }_K^2
  \right)^{1/2}.
\end{align*}
Estimator $\tilde\eta(\btau_h)$ and error bound $\tilde E(\btau_h)$ are evaluated as described in Section~\ref{se:evalee}.
Estimator $\eta(\btau_h)$ and error bound $E(\btau_h)$ cannot be computed without tensor truncation errors and their approximate values are computed using the TT Cross algorithm.
The accuracy of these error estimators and error bounds is measured by indices of effectivity
$
\Ieff^{(\mathcal{E})}=\mathcal{E} / \trinorm{u-u_h},
$
where $\mathcal{E}$ stands for $\eta$, $E$, $\tilde\eta$, and $\tilde E$, respectively.

We start with the dependence of indices of effectivity on the number of grid points $n$. We fix $\kappa=0$ and $d=3$.
The shift parameter is chosen as $\kappa_0 = 0.1$ and TT approximation thresholds
for the primal and complementary problem
as $\delta_p = 10^{-3}$ and $\delta_c = 10^{-7}$, respectively, see Algorithms~\ref{alg:als} and \ref{alg:als-block}.
Figure~\ref{fig:lap:n} (left) shows differences of $\Ieff$ from $1$.
We can notice that all estimators are almost indistinguishable for this example and converge with the second order $\mathcal{O}(h^2)$ as the grid is refined. This is due to sufficiently small $\kappa_0$.
The only and very minor difference caused by the second term in \eqref{eq:upperbound} can be spotted only for very fine grids.
Error bounds $E$ and $\tilde E$ are guaranteed by Theorem~\ref{th:main} to have indices of effectivity above one. Interestingly, all indices of effectivity of estimators $\eta$ and $\tilde\eta$ are above one as well.
Further notice that the optimal estimator $\eta(\btau_h)$ is indistinguishable from the simpler version $\tilde\eta(\btau_h)$.

In Figure~\ref{fig:lap:n} (right) we plot CPU times for individual parts of the scheme: the assembly and solution of the primal problem \eqref{eq:FEM} (``Primal''), the assembly and solution of the complementary problem~\eqref{eq:dualprob} (``Compl.''), and the evaluation of estimators $\eta$, $\tilde\eta$, $E$ and $\tilde E$ (``Estimates'').
Note that the complementary problem is solved using the block TT version (Algorithm~\ref{alg:als-block}) described in Section~\ref{se:alstau} using the $\mathbf{RT}_1$ space unless stated otherwise.
For comparison we also present the CPU time for the solution of the complementary problem by the simple ALS (Algorithm~\ref{alg:als}) applied to a $(d+1)$-dimensional problem, mentioned at the beginning of Section~\ref{se:alstau} (``Compl. $d+1$''),
as well as by Algorithm~\ref{alg:als-block} but applied to the $\mathbf{RT}_0$ discretization of~\eqref{eq:dualprob}.

We can notice that the computational cost is asymptotically linear in $n$ although we are solving a three-dimensional problem with approximately $n^3$ degrees of freedom.
This is due to uniform boundedness of TT ranks of all TT approximations.
In particular, the TT ranks of $\btau_h$ are bounded by $5$ and TT ranks of $\eta_K(\btau_h)^2$ are bounded by $14$ in this example.
On top of this the high accuracy of the approximate solution is guranteed by the error estimator $\tilde E(\btau_h)$.
Concerning the complementary problem, the simple ALS is much slower than the block TT solver due to a larger number of GMRES iterations needed for solving \eqref{eq:localsys} compared to \eqref{eq:locblock}.

The high computational times for the complementary problem can be reduced by using
the $\mathbf{RT}_0$ space, containing only $n+1$ basis functions in each variable.
However, $\mathbf{RT}_0$ seems to be too coarse, because it yields very pessimistic error estimates, typically $\Ieff^{(\eta)} \approx 59$ and $\Ieff^{(E)} \approx 61$ for this example.
Although the values of effectivity indices tend to stabilize as $n \rightarrow \infty$,
we cannot use this fact in practice to make the bounds more accurate, because there is no guaranteed way of estimating them.
An alternative how to speed up the solution of the complementary problem could be the $\mathbf{RT}_1$ discretization on a coarser grid.
However, this approach is also prone to difficulties, because the interpolation between different grids may amplify the errors, especially those arising in quadrature formulae for entries of matrices and right-hand-side vectors.
This issue may occur, for example, if $u_h$ is irregular on an interface between fine cells, which becomes an interior interface on the coarse grid.
Therefore, we proceed with the block TT version applied to the $\mathbf{RT}_1$ discretization on the original grid in the rest of the paper.

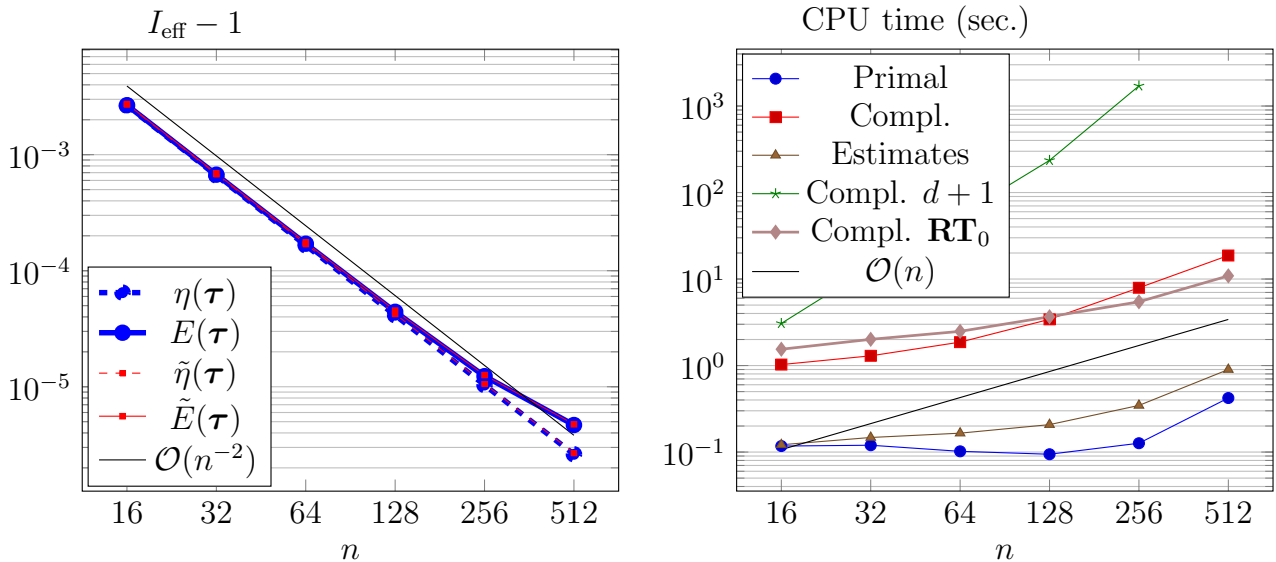
\begin{figure}[htb]
\centering
\caption{Left: differences of effectivity indices from 1 ($\mathbf{RT}_1$ space only); Right: CPU times of different stages of the scheme. The horizontal axes denote the number of grid points in each direction.}
\label{fig:lap:n}
\resizebox{0.49\linewidth}{!}{%
\begin{tikzpicture}
 \begin{axis}[%
  xmode=log,
  ymode=log,
  ylabel={$\Ieff-1$},
  xlabel={$n$},
  xtick={16,32,64,128,256,512},
  xticklabels={16,32,64,128,256,512},
  legend style={at={(0.01,0.01)},anchor=south west},
 ]
  \addplot+[blue,dashed,mark=*,mark options={blue},line width=2pt] coordinates{%
                         (16 , 2.6482e-03)
                         (32 , 6.6188e-04)
                         (64 , 1.6545e-04)
                         (128, 4.1365e-05)
                         (256, 1.0349e-05)
                         (512, 2.6164e-06)
                        };\addlegendentry{$\eta(\btau)$};
  \addplot+[blue,solid,mark=*,mark options={blue},line width=2pt] coordinates{%
                         (16 , 2.6698e-03)
                         (32 , 6.7269e-04)
                         (64 , 1.7090e-04)
                         (128, 4.4265e-05)
                         (256, 1.2340e-05)
                         (512, 4.6815e-06)
                        };\addlegendentry{$E(\btau)$};
  \addplot+[red,dashed,mark=square*,mark options={red},mark size=1] coordinates{%
                         (16 , 2.7128e-03)
                         (32 , 6.7869e-04)
                         (64 , 1.6970e-04)
                         (128, 4.2429e-05)
                         (256, 1.0615e-05)
                         (512, 2.6850e-06)
                        };\addlegendentry{$\tilde \eta(\btau)$};
  \addplot+[red,solid,mark=square*,mark options={red},mark size=1] coordinates{%
                         (16 , 2.7345e-03)
                         (32 , 6.8950e-04)
                         (64 , 1.7515e-04)
                         (128, 4.5330e-05)
                         (256, 1.2606e-05)
                         (512, 4.7501e-06)
                        };\addlegendentry{$\tilde E(\btau)$};
  \addplot+[no marks,black,solid,domain=16:512] {x^(-2)}; \addlegendentry{$\mathcal{O}(n^{-2})$};
 \end{axis}
\end{tikzpicture}
}~
\resizebox{0.49\linewidth}{!}{%
\begin{tikzpicture}
 \begin{axis}[%
  xmode=log,
  ymode=log,
  ylabel={CPU time (sec.)},
  xlabel={$n$},
  xtick={16,32,64,128,256,512},
  xticklabels={16,32,64,128,256,512},
  legend style={at={(0.01,0.99)},anchor=north west},
 ]
  \addplot+ coordinates{%
                         (16 ,1.1733e-01)
                         (32 ,1.1994e-01)
                         (64 ,1.0197e-01)
                         (128,9.4364e-02)
                         (256,1.2652e-01)
                         (512,4.2139e-01)
                        };\addlegendentry{Primal};
  \addplot+ coordinates{%
                         (16 , 2.2270e-02 +  1.0041e+00)
                         (32 , 2.6232e-02 +  1.2692e+00)
                         (64 , 3.2905e-02 +  1.8324e+00)
                         (128, 4.7921e-02 +  3.3751e+00)
                         (256, 7.9418e-02 +  7.8452e+00)
                         (512, 1.8143e-01 +  1.8532e+01)
                        };\addlegendentry{Compl.};
  \addplot+[mark=triangle*] coordinates{%
                         (16 , 3.3175e-02   +    7.8818e-02 + 9.4737e-03)
                         (32 , 3.8562e-02   +    1.0008e-01 + 8.7838e-03)
                         (64 , 5.0437e-02   +    1.0524e-01 + 9.6919e-03)
                         (128, 7.3567e-02   +    1.2390e-01 + 1.0124e-02)
                         (256, 1.3725e-01   +    1.9907e-01 + 9.7289e-03)
                         (512, 5.3848e-01   +    3.5159e-01 + 9.7129e-03)
                        };\addlegendentry{Estimates};

  \addplot+[green!50!black,mark options={green!50!black}] coordinates{%
                         (16 , 3.0471e-02    +   3.0381e+00)
                         (32 , 4.6117e-02    +   1.3552e+01)
                         (64 , 4.3478e-02    +   5.5175e+01)
                         (128, 6.1968e-02    +   2.3533e+02)
                         (256, 8.5373e-02    +   1.7075e+03)
                        };\addlegendentry{Compl. $d+1$};

  \addplot+[pink!70!black,mark=diamond*,mark options={pink!70!black},line width=1pt] coordinates{%
                         (16 , 2.1574e-02  + 1.5222e+00)
                         (32 , 2.6315e-02  + 1.9833e+00)
                         (64 , 3.2594e-02  + 2.4523e+00)
                         (128, 4.0538e-02  + 3.6249e+00)
                         (256, 6.0800e-02  + 5.3961e+00)
                         (512, 1.1117e-01  + 1.0776e+01)
                        };\addlegendentry{Compl. $\mathbf{RT}_0$};

  \addplot+[black,solid,no marks,domain=16:512] {x/150}; \addlegendentry{$\mathcal{O}(n)$};
 \end{axis}
\end{tikzpicture}
}
\end{figure}

Next, we present how the estimators behave with respect to the dimension $d$.
We keep $\kappa=0$ and fix $n=128$.
Due to the conditioning of the complementary problem in higher dimensions, we set
a larger shift $\kappa_0 = 1$ and relax the tolerance of the TT solver for the complementary problem to $\delta_c=10^{-4}$.
With this setting the block TT solver converges, but the quality of the reconstructed flux $\btau_h$ is lower.
The tolerance of the primal problem is kept to be $\delta_p = 10^{-3}$.
Figure~\ref{fig:lap:d} presents differences of indices of effectivity from 1 (left)
and the corresponding CPU times (right).

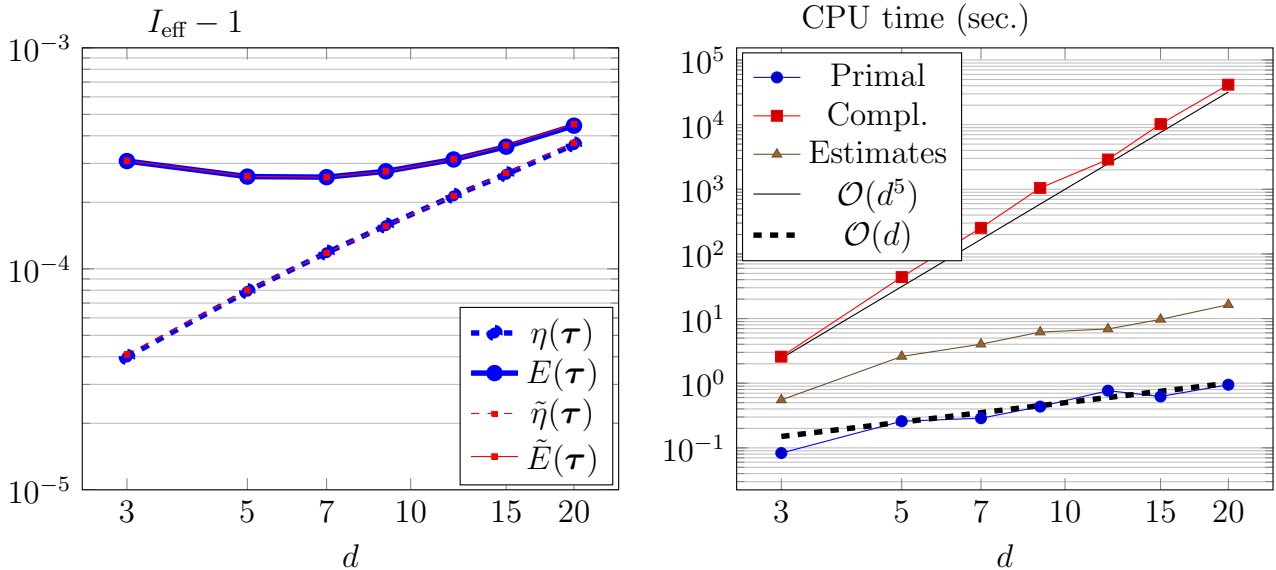
\begin{figure}[htb]
\centering
\caption{Differences of indices of effectivity from 1 (left) and CPU times (right) vs. dimension $d$ for the Poisson problem.}
\label{fig:lap:d}
\resizebox{0.49\linewidth}{!}{%
\begin{tikzpicture}
 \begin{axis}[%
  xmode=log,
  ymode=log,
  xlabel={$d$},
  ymin=1e-5,ymax=1e-3,
  ylabel={$\Ieff-1$},
  xtick={3,5,7,10,15,20},
  xticklabels={3,5,7,10,15,20},
  legend style={at={(0.99,0.01)},anchor=south east},
 ]
  \addplot+[blue,dashed,mark=*,mark options={blue},line width=2pt] coordinates{%
                         (3 ,  3.9897e-05)
                         (5 ,  7.9178e-05)
                         (7 ,  1.1833e-04)
                         (9 ,  1.5685e-04)
                         (12,  2.1375e-04)
                         (15,  2.6778e-04)
                         (20,  3.6498e-04)
                        };\addlegendentry{$\eta(\btau)$};
  \addplot+[blue,solid,mark=*,mark options={blue},line width=2pt] coordinates{%
                         (3 ,  3.0742e-04)
                         (5 ,  2.6147e-04)
                         (7 ,  2.5993e-04)
                         (9 ,  2.7623e-04)
                         (12,  3.1259e-04)
                         (15,  3.5708e-04)
                         (20,  4.4446e-04)
                        };\addlegendentry{$E(\btau)$};
  \addplot+[red,dashed,mark=square*,mark options={red},mark size=1] coordinates{%
                         (3 ,  4.0969e-05)
                         (5 ,  8.0006e-05)
                         (7 ,  1.1831e-04)
                         (9 ,  1.5647e-04)
                         (12,  2.1441e-04)
                         (15,  2.7250e-04)
                         (20,  3.7209e-04)
                        };\addlegendentry{$\tilde \eta(\btau)$};
  \addplot+[red,solid,mark=square*,mark options={red},mark size=1] coordinates{%
                         (3 ,  3.0849e-04)
                         (5 ,  2.6230e-04)
                         (7 ,  2.5991e-04)
                         (9 ,  2.7585e-04)
                         (12,  3.1325e-04)
                         (15,  3.6180e-04)
                         (20,  4.5157e-04)
                        };\addlegendentry{$\tilde E(\btau)$};
 \end{axis}
\end{tikzpicture}
}~
\resizebox{0.49\linewidth}{!}{%
\begin{tikzpicture}
 \begin{axis}[%
  xmode=log,
  ymode=log,
  ylabel={CPU time (sec.)},
  xlabel={$d$},
  xtick={3,5,7,10,15,20},
  xticklabels={3,5,7,10,15,20},
  legend style={at={(0.01,0.99)},anchor=north west},
 ]
  \addplot+ coordinates{%
                         (3 , 8.3113e-02)
                         (5 , 2.5825e-01)
                         (7 , 2.8727e-01)
                         (9 , 4.3489e-01)
                         (12, 7.6674e-01)
                         (15, 6.2360e-01)
                         (20, 9.4663e-01)
                        };\addlegendentry{Primal};
  \addplot+ coordinates{%
                         (3 , 8.1936e-02   +   2.4907e+00)
                         (5 , 2.7786e-01   +   4.3624e+01)
                         (7 , 5.9651e-01   +   2.5252e+02)
                         (9 , 1.1738e+00   +   1.0476e+03)
                         (12, 2.6967e+00   +   2.8882e+03)
                         (15, 2.8135e+00   +   1.0221e+04)
                         (20, 8.0936e+00   +   4.1458e+04)
                        };\addlegendentry{Compl.};
  \addplot+[mark=triangle*] coordinates{%
                         (3 , 3.4400e-01  +  2.0069e-01  +  6.5260e-03)
                         (5 , 1.5138e+00  +  9.3868e-01  +  1.4234e-01)
                         (7 , 2.5222e+00  +  1.4055e+00  +  8.7415e-02)
                         (9 , 3.7097e+00  +  2.4287e+00  +  4.2513e-02)
                         (12, 3.7594e+00  +  3.0799e+00  +  6.7951e-02)
                         (15, 4.2258e+00  +  5.4313e+00  +  5.7231e-02)
                         (20, 4.3391e+00  +  1.1949e+01  +  1.2408e-01)
                        };\addlegendentry{Estimates};

  \addplot+[black,solid,no marks,domain=3:20] {x^5/100}; \addlegendentry{$\mathcal{O}(d^5)$};
  \addplot+[black,dashed,no marks,domain=3:20,line width=2pt] {x/20}; \addlegendentry{$\mathcal{O}(d)$};
 \end{axis}
\end{tikzpicture}
}
\end{figure}

The inefficiency of the error estimators grows slowly with the dimension, but even for the $20$-dimensional problem the estimators
remain accurate up to 3 decimal digits, which is essentially perfect for practical purposes.
Indeed, the error estimator $\tilde E(\btau_h)$ guarantees that the relative error $\trinorm{u - u_h}/\trinorm{u}$ is below $0.00782$, while the exact error is approximately $0.00781$ irrespectively of the dimension $d$.
However, the CPU time of solving the complementary problem grows significantly. The observed order is about $\mathcal{O}(d^5)$.
This is partially because the complementary solution is a vector function of size $d$ and
the TT ranks and the number of GMRES iterations for the reduced problems \eqref{eq:locblock},
both seem to grow linearly in $d$.
Nevertheless, this polynomial growth of the complexity is still much better than the exponential growth, but much worse than the linear cost of solving just the primal problem.

Finally, we test the influence of the reaction coefficient $\kappa^2$ on the accuracy of the error estimator.
We fix $n=128$ and $d=3$ and consider various positive values of $\kappa$.
Therefore the shift $\kappa_0$ is not needed and we formally set $\kappa_0 = 0$.
Here, we use the TT approximation tolerance for the complementary problem $\delta_c=10^{-7}$ in order to solve the problem
for the smallest $\kappa^2=10^{-3}$.
Note that the reaction-diffusion problem \eqref{eq:modpro} is singularly perturbed
for large $\kappa$ and its solution often has steep boundary layers of width $\kappa$.
If these layers are not resolved by the mesh, the finite element solution exhibits spurious oscillations.
The proposed estimator captures this behaviour well and estimates the error accurately even in the singularly perturbed case.
Figure~\ref{fig:lap:kappa} (left) shows that the index of effectivity is almost 1 for small values of $\kappa^2$ and grows for larger values. However, even for $\kappa^2=10^6$ all indices of effectivity are below $1.4$.

As discussed in Section~\ref{se:evalee}, it is more efficient numerically to estimate $E(\btau_h)$ by computing separately $\|\tilde\eta_1\|_2, \|\tilde\eta_2\|_2$, and
the oscillation term $\osc$ computed as the 2-norm of a vector interpolating $\min\{h_K \pi^{-1}, \max_K \kappa^{-1}\}\|r-\Pi r\|_{K}$,
and estimating
\[
\tilde E(\btau_h) = \sqrt{\|\tilde\eta_1\|^2_2 + \|\tilde\eta_2\|^2_2} + \osc + \kappa_0 C_p \|\tilde\eta_2\|_2.
\]
We observe that this (upper) bound of the error is still pretty accurate.
To find out the reasons, we study the values of  $\|\tilde\eta_1\|_2, \|\tilde\eta_2\|_2$ and $\osc$ for two values of $\kappa^2$ in Table~\ref{tab:eta_split}.
\begin{table}[htb]
\centering
\caption{Components of the total error estimator for the 3D elliptic equation, $n=128$.}
\label{tab:eta_split}
\begin{tabular}{c|ccc}
 $\kappa^2$ & $\|\tilde\eta_1\|_2$ &  $\|\tilde\eta_2\|_2$ &  $\osc$       \\\hline
 $10^{-2}$  & 1.6667e-02           & 2.4749e-06            & 8.7726e-07    \\
 $10^2$     & 1.6667e-02           & 1.3728e-04            & 5.0233e-06    \\
\end{tabular}
\end{table}
The first term corresponds to $\|\btau_h - \bnabla u_h\|_{[L^2(\Omega)]^d}$ and it dominates in both regimes of $\kappa^2$. Since it can be computed exactly in the TT format, all error estimates are accurate. As a by-product, the shift term $\kappa_0 C_p \|\tilde\eta_2\|_2$ is also small, since often both $\kappa_0$ and $C_p$ are less than $1$.

\begin{figure}[htb]
\centering
\caption{Differences of indices of effectivity from 1 (left) and CPU times (right) vs. the coefficient $\kappa^2$ for the 3D reaction-diffusion problem.}
\label{fig:lap:kappa}
\resizebox{0.49\linewidth}{!}{%
\begin{tikzpicture}
 \begin{axis}[%
  xmode=log,
  ymode=log,
  xlabel={$\kappa^{2}$},
  ylabel={$\Ieff-1$},
  legend style={at={(0.01,0.99)},anchor=north west},
 ]
  \addplot+[blue,dashed,mark=*,mark options={blue},line width=2pt] coordinates{%
                         (1e-3, 4.1414e-05)
                         (1e-2, 4.1397e-05)
                         (1e-1, 4.1568e-05)
                         (1   , 4.3316e-05)
                         (1e2 , 2.4018e-04)
                         (1e4 , 1.9906e-02)
                         (1e6 , 3.7642e-01)
                        };\addlegendentry{$\eta(\btau)$};
  \addplot+[blue,solid,mark=*,mark options={blue},line width=2pt] coordinates{%
                         (1e-3, 4.1414e-05)
                         (1e-2, 4.1397e-05)
                         (1e-1, 4.1568e-05)
                         (1   , 4.3316e-05)
                         (1e2 , 2.4018e-04)
                         (1e4 , 1.9906e-02)
                         (1e6 , 3.7642e-01)
                        };\addlegendentry{$E(\btau)$};
  \addplot+[red,dashed,mark=square*,mark options={red},mark size=1] coordinates{%
                         (1e-3, 4.2479e-05)
                         (1e-2, 4.2462e-05)
                         (1e-1, 4.2627e-05)
                         (1   , 4.4326e-05)
                         (1e2 , 2.4034e-04)
                         (1e4 , 1.9906e-02)
                         (1e6 , 3.9604e-01)
                        };\addlegendentry{$\tilde \eta(\btau)$};
  \addplot+[red,solid,mark=square*,mark options={red},mark size=1] coordinates{%
                         (1e-3, 4.2479e-05)
                         (1e-2, 4.2462e-05)
                         (1e-1, 4.2627e-05)
                         (1   , 4.4326e-05)
                         (1e2 , 2.4034e-04)
                         (1e4 , 1.9906e-02)
                         (1e6 , 3.9604e-01)
                        };\addlegendentry{$\tilde E(\btau)$};
 \end{axis}
\end{tikzpicture}
}~
\resizebox{0.49\linewidth}{!}{%
\begin{tikzpicture}
 \begin{axis}[%
  xmode=log,
  ymode=log,
  xlabel={$\kappa^{2}$},
  ylabel={CPU time (sec.)},
  legend style={at={(0.99,0.99)},anchor=north east},
 ]
  \addplot+ coordinates{%
                         (1e-3, 8.6689e-02)
                         (1e-2, 9.5746e-02)
                         (1e-1, 9.5698e-02)
                         (1   , 9.4539e-02)
                         (1e2 , 9.2449e-02)
                         (1e4 , 6.7473e-02)
                         (1e6 , 6.7523e-02)
                        };\addlegendentry{Primal};
  \addplot+ coordinates{%
                         (1e-3, 4.8830e-02 +  2.3225e+01)
                         (1e-2, 5.2346e-02 +  3.9949e+00)
                         (1e-1, 4.7584e-02 +  2.9550e+00)
                         (1   , 4.2794e-02 +  3.4194e+00)
                         (1e2 , 4.8069e-02 +  2.1456e+00)
                         (1e4 , 4.9954e-02 +  3.8808e-01)
                         (1e6 , 4.4364e-02 +  1.8501e-01)
                        };\addlegendentry{Compl.};
  \addplot+[mark=triangle*] coordinates{%
                         (1e-3, 7.4410e-02  +  1.7345e-01 +  9.8413e-03)
                         (1e-2, 7.8137e-02  +  1.2536e-01 +  1.1586e-02)
                         (1e-1, 7.5653e-02  +  1.1700e-01 +  1.0236e-02)
                         (1   , 7.4617e-02  +  1.1377e-01 +  1.0332e-02)
                         (1e2 , 7.2683e-02  +  1.0472e-01 +  6.9186e-03)
                         (1e4 , 7.2035e-02  +  1.0801e-01 +  7.5276e-03)
                         (1e6 , 6.6861e-02  +  1.0452e-01 +  7.3886e-01)
                        };\addlegendentry{Estimates};
 \end{axis}
\end{tikzpicture}
}
\end{figure}
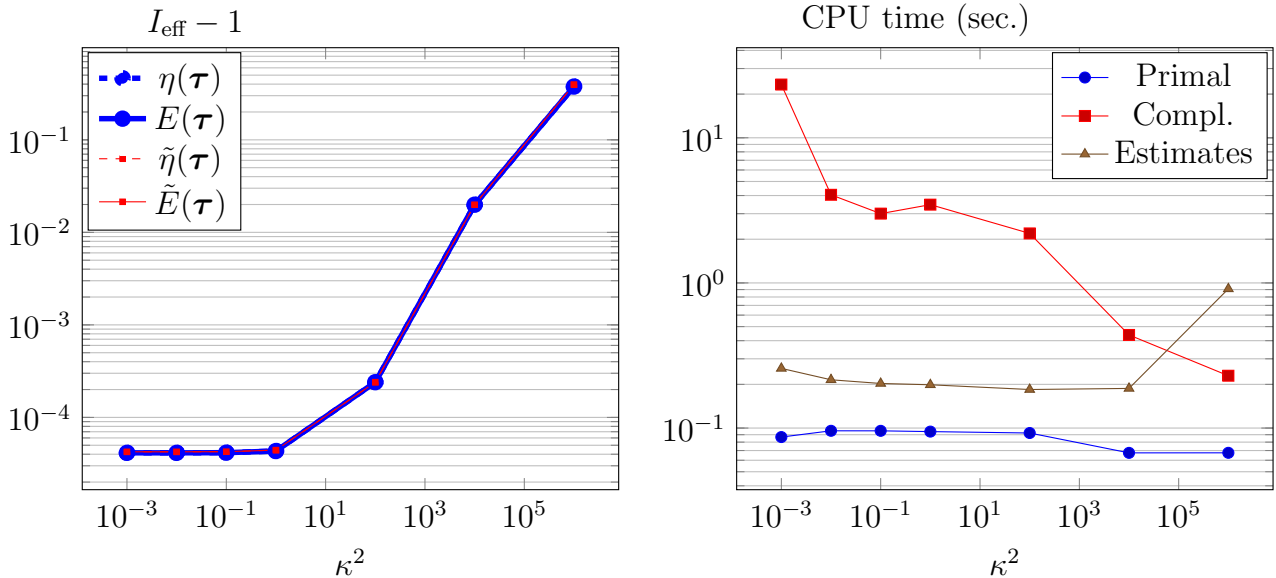



In Figure~\ref{fig:lap:kappa}~(right) we see that the CPU time grows significantly for small $\kappa^2$,
since the complementary problem becomes severely ill-conditioned.
This motivates future research on more efficient preconditioning techniques.
However, even with a simple block Jacobi preconditioner, the CPU time remains stable for a range of orders of $\kappa^2$,
and even decreases for large $\kappa^2$, where the operator approaches a perturbed identity.

\subsection{Poisson equation with variable TT ranks of the solution}
In this example we consider a three-dimensional Poisson equation (i.e. $\kappa^2=0$) in a unit cube, and design the exact solution in the form
\begin{equation}\label{eq:gauss}
 u(x) = \exp\left(-\left(10\left(x_1 \cos^2 \alpha  + x_2 \cos\alpha \sin\alpha + x_3 \sin\alpha\right) - 5\right)^2\right) \prod_{i=1}^{3} x_i(1-x_i),
\end{equation}
parametrized by the angle $\alpha \in (0, \pi/4)$.
The last term accounts for the boundary conditions, and the exponential term defines an anisotropic Gaussian function rotated by the angle $\alpha$ in both $(x_1,x_2)$ and $(x_1,x_3)$ planes.
For $\alpha=0$ the function has a product form, but for $\alpha>0$
it lacks an exact TT decomposition with low ranks, and the ranks of an approximation depend on both the accuracy and the angle $\alpha$.
This also increases the TT ranks of the complementary solution $\btau_h$, and of the error estimators.
We solve the primal problem with the stopping threshold $\delta_p=10^{-3}$ on the grid size $n=64$, the complementary problem with the stopping threshold $\delta_c=10^{-5}$, and with the reaction coefficient shift $\kappa_0=1$.

In Figure~\ref{fig:lap:gauss} (left), we show the TT ranks of the exact solution \eqref{eq:gauss}, truncated to the relative 2-norm error $\delta_c=10^{-5}$ using TT-SVD \cite{osel-tt-2011}, as well as the TT ranks of the complementary solution (for the same threshold).
We see that the ranks may grow logarithmically with the angle.
This results in a sub-algebraic growth of the computing times in Figure~\ref{fig:lap:gauss} (right).
Nevertheless, the scheme remains robust for the whole range of the parameter.
In particular, all effectivity indices are confined within $3\cdot 10^{-3}$ and $5 \cdot 10^{-3}$ from 1 for all considered angles.

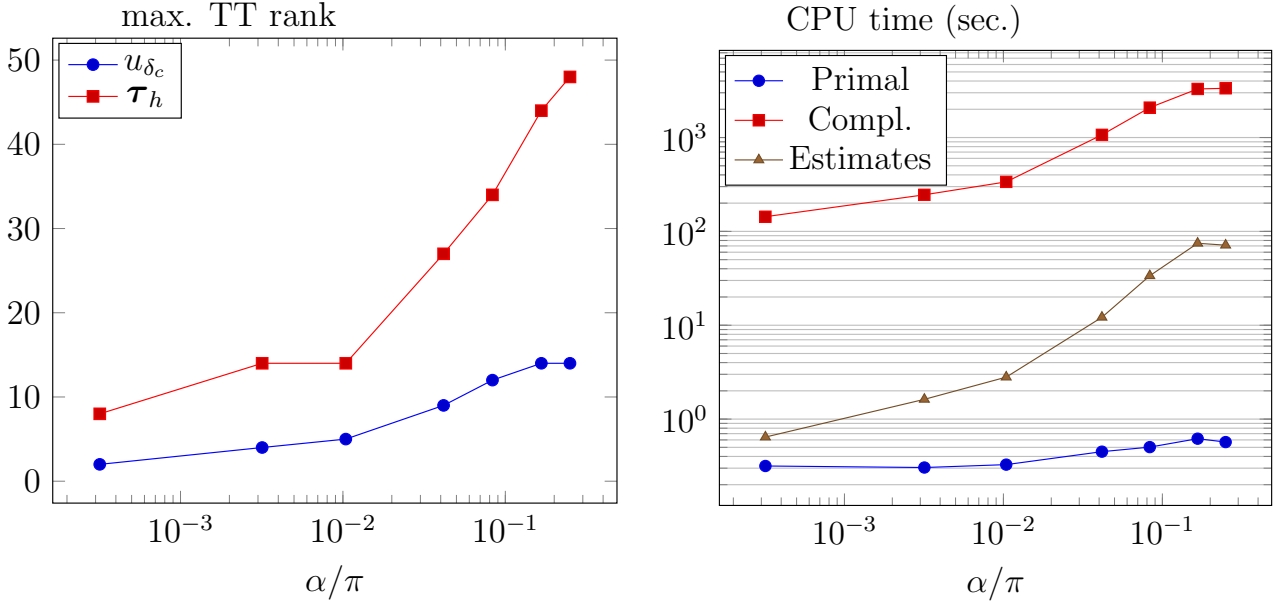
\begin{figure}[htb]
\centering
\caption{3D Poisson equation with variable TT ranks of the solution. Left: maximal TT ranks, right: CPU times. The horizontal axes denote the rotation angle $\alpha$ as defined in~\eqref{eq:gauss}.}
\label{fig:lap:gauss}
\resizebox{0.49\linewidth}{!}{%
\begin{tikzpicture}
 \begin{axis}[%
  xmode=log,
  ymode=normal,
  xlabel={$\alpha/\pi$},
  ylabel={max. TT rank},
  legend style={at={(0.01,0.99)},anchor=north west},
 ]
   \addplot+ coordinates{%
     (1.0/4     , 14)
     (1.0/6     , 14)
     (1.0/12    , 12)
     (1.0/24    , 9 )
     (1.0/96    , 5 )
     (3.1831e-03, 4 )
     (3.1831e-04, 2 )
   }; \addlegendentry{$u_{\delta_c}$};
   \addplot+ coordinates{%
     (1.0/4     , 48)
     (1.0/6     , 44)
     (1.0/12    , 34)
     (1.0/24    , 27)
     (1.0/96    , 14)
     (3.1831e-03, 14)
     (3.1831e-04, 8 )
   }; \addlegendentry{$\btau_h$};
 \end{axis}
\end{tikzpicture}
}~
\resizebox{0.49\linewidth}{!}{%
\begin{tikzpicture}
 \begin{axis}[%
  xmode=log,
  ymode=log,
  xlabel={$\alpha/\pi$},
  ylabel={CPU time (sec.)},
  legend style={at={(0.01,0.99)},anchor=north west},
 ]
  \addplot+ coordinates{%
     (1.0/4     , 5.6895e-01)
     (1.0/6     , 6.1855e-01)
     (1.0/12    , 5.0279e-01)
     (1.0/24    , 4.4978e-01)
     (1.0/96    , 3.2725e-01)
     (3.1831e-03, 3.0484e-01)
     (3.1831e-04, 3.1648e-01)
                        };\addlegendentry{Primal};
  \addplot+ coordinates{%
     (1.0/4     , 1.2840e-01   +    3.3443e+03)
     (1.0/6     , 1.2785e-01   +    3.2942e+03)
     (1.0/12    , 8.9819e-02   +    2.0847e+03)
     (1.0/24    , 7.1850e-02   +    1.0687e+03)
     (1.0/96    , 4.1595e-02   +    3.3690e+02)
     (3.1831e-03, 4.8179e-02   +    2.4526e+02)
     (3.1831e-04, 4.3552e-02   +    1.4333e+02)
                        };\addlegendentry{Compl.};
  \addplot+[mark=triangle*] coordinates{%
     (1.0/4     , 5.3014e+01  +  1.7886e+01  +  3.9773e-01)
     (1.0/6     , 5.9404e+01  +  1.4935e+01  +  3.9115e-01)
     (1.0/12    , 2.1028e+01  +  1.2416e+01  +  2.3206e-01)
     (1.0/24    , 4.2405e+00  +  7.6513e+00  +  2.6660e-01)
     (1.0/96    , 5.0737e-01  +  2.2547e+00  +  3.7941e-02)
     (3.1831e-03, 1.7445e-01  +  1.4315e+00  +  1.7666e-02)
     (3.1831e-04, 1.2995e-01  +  5.0004e-01  +  1.3477e-02)
                        };\addlegendentry{Estimates};
 \end{axis}
\end{tikzpicture}
}
\end{figure}

\subsection{Schr\"odinger equation with Henon--Heiles potential}

In the second example we consider a variable reaction coefficient.
We carry out one step of the Shift-and-Invert iteration for searching the ground state in the molecular Schr\"odinger equation with the model Henon--Heiles potential \cite[Sec. III(A)]{meyer-henon-2002}.
The elliptic PDE \eqref{eq:modpro} is posed on a hypercube $\Omega = (-5,5)^d$. The reaction coefficient is the following polynomial of degree three:
$$
\kappa^2(x) = V_\mathrm{h}(x) + V_\mathrm{u}(x) + 1,
$$
where the harmonic potential $V_\mathrm{h}(x)$ and its non-harmonic perturbation $V_\mathrm{u}(x)$ are given by
$$
  V_\mathrm{h}(x) = \sum_{k=1}^{d} x_k^2
  \quad\text{and}\quad
  V_\mathrm{u}(x) = 0.223606 \sum_{k=1}^{d-1} \left(x_k^2 x_{k+1} - \frac{1}{3} x_{k+1}^3\right).
$$
This corresponds to the Henon--Heiles potential $V_\mathrm{h}(x) + V_\mathrm{u}(x)$ plus a unit shift due to the Shift-and-Invert power method.

The exact solution is chosen to be the Gaussian function
$$
u(x) = \prod_{k=1}^{d} \exp\left(-\frac{x_k^2}{2}\right),
$$
which represents the typical form of the lowest eigenfunction of the Schr\"odinger operator.
The right-hand side $f(x) = \left(d+1+V_\mathrm{u}(x)\right)u(x)$ is computed accordingly.
Despite the cubic nonlinearities, the reaction coefficient $V_\mathrm{h}(x) + V_\mathrm{u}(x) + 1$ is nonnegative (and actually greater than $1$) in $\Omega$, so its definition in the form of $\kappa^2$ is valid.
From the modelling point of view, this domain is also sufficiently large such that the solution is negligibly small (below $10^{-5}$) on the boundary,
which justifies the use of the homogeneous Dirichlet boundary conditions.

The high-dimensional Henon--Heiles Schr\"odinger model was approached with Tensor Train eigenvalue solvers and a spectral discretization \cite{meyer-mctdh-book-2009,khos-dmrg-2010,dkos-eigb-2014},
but no attempts were made to estimate a posteriori errors in a systematic way (instead, a fixed grid was chosen which proved to be overwhelmingly fine in low-dimensional tests).
Here we compute the guaranteed error estimator using the complementary solution
and illustrate the performance of the described approach in high dimensions.

Since the reaction coefficient is strictly positive, we set $\kappa_0=0$. Consequently, $E(\btau)=\eta(\btau)$ and $\tilde E(\btau)=\tilde\eta(\btau)$.
The approximation thresholds are fixed to $\delta_p=\delta_c=10^{-3}$ and grids with either $n=16$ or $n=64$ elements in each dimension are used.
We continue to use $m=4$ quadrature points in each direction of each element.
This gives a relative quadrature error of the order of $10^{-8}$
(estimated by comparing right-hand side elements computed with $m=4$ and $m=8$ on the largest grid cells corresponding to $n=16$),
which is many orders of magnitude smaller than the discretization error.

\begin{figure}[htb]
\centering
\caption{Differences of effectivity indices from 1 (left) and CPU times (right) vs. dimension $d$ for the Schr\"odinger equation. Solid lines: $n=64$, dashed lines: $n=16$.}
\label{fig:hh:d}
\resizebox{0.49\linewidth}{!}{%
\begin{tikzpicture}
 \begin{axis}[%
  xmode=log,
  ymode=log,
  xlabel={$d$},
  ylabel={$\Ieff-1$},
  xtick={3,5,7,10,15,20,30,40},
  xticklabels={3,5,7,10,15,20,30,40},
  legend style={at={(0.01,0.99)},anchor=north west},
 ]
  \addplot+[blue,solid,mark=*,mark options={blue},line width=2pt] coordinates{%
                         (3  ,  4.7909e-03)
                         (5  ,  6.3232e-03)
                         (7  ,  1.0680e-02)
                         (9  ,  2.7117e-02)
                         (10 ,  4.7366e-02)
                         (12 ,  1.0461e+00)
                         (15 ,  4.6785e+00)
                         (20 ,  7.1243e+00)
                         (30 ,  6.4999e+00)
                         (40 ,  5.6559e+00)
                        };\addlegendentry{$E(\btau)$};
  \addplot+[red,solid,mark=square*,mark options={red},mark size=1] coordinates{%
                         (3  ,  4.9712e-03)
                         (5  ,  6.4867e-03)
                         (7  ,  1.3288e-02)
                         (9  ,  8.8293e-02)
                         (10 ,  2.9205e-01)
                         (12 ,  1.2124e+00)
                         (15 ,  4.9371e+00)
                         (20 ,  8.2526e+00)
                         (30 ,  9.1873e+00)
                         (40 ,  8.6029e+00)
                        };\addlegendentry{$\tilde E(\btau)$};

  \addplot+[blue,dashed,mark=*,mark options={blue},line width=2pt] coordinates{%
                         (3  ,  7.8021e-02)
                         (5  ,  1.0177e-01)
                         (7  ,  1.2394e-01)
                         (9  ,  1.4455e-01)
                         (10 ,  1.5575e-01)
                         (12 ,  1.8740e-01)
                         (15 ,  5.6462e-01)
                         (20 ,  8.5182e-01)
                         (30 ,  1.4031e+00)
                         (40 ,  9.1149e-01)
                        };
  \addplot+[red,dashed,mark=square*,mark options={red},mark size=1] coordinates{%
                         (3  ,  7.9834e-02)
                         (5  ,  1.0277e-01)
                         (7  ,  1.2480e-01)
                         (9  ,  1.4537e-01)
                         (10 ,  1.5798e-01)
                         (12 ,  1.9747e-01)
                         (15 ,  6.5015e-01)
                         (20 ,  1.1021e+00)
                         (30 ,  1.6399e+00)
                         (40 ,  1.2770e+00)
                        };
 \end{axis}
\end{tikzpicture}
}~
\resizebox{0.49\linewidth}{!}{%
\begin{tikzpicture}
 \begin{axis}[%
  xmode=log,
  ymode=log,
  ylabel={CPU time (sec.)},
  xlabel={$d$},
  xtick={3,5,7,10,15,20,30,40},
  xticklabels={3,5,7,10,15,20,30,40},
  legend style={at={(0.01,0.99)},anchor=north west},
 ]
  \addplot+ coordinates{%
                         (3 , 1.8047e-01)
                         (5 , 1.5549e-01)
                         (7 , 2.3539e-01)
                         (9 , 3.1143e-01)
                         (10, 5.2607e-01)
                         (12, 4.7906e-01)
                         (15, 6.8892e-01)
                         (20, 1.1966e+00)
                         (30, 2.4426e+00)
                         (40, 3.1670e+00)
                        };\addlegendentry{Primal};
  \addplot+ coordinates{%
                         (3 , 8.7070e-02  +    9.7595e-01)
                         (5 , 5.4531e-01  +    2.5901e+01)
                         (7 , 1.2430e+00  +    4.0121e+02)
                         (9 , 2.2866e+00  +    2.3221e+03)
                         (10, 2.9685e+00  +    2.8194e+03)
                         (12, 4.4555e+00  +    6.2472e+03)
                         (15, 7.6508e+00  +    1.0468e+04)
                         (20, 2.1844e+01  +    2.7995e+04)
                         (30, 6.0559e+01  +    1.4477e+05)
                         (40, 1.4932e+02  +    4.7496e+05)
                        };\addlegendentry{Compl.};
  \addplot+[mark=triangle*] coordinates{%
                         (3 , 7.5046e-02  +  1.1453e-01  +  1.5428e-02)
                         (5 , 1.9337e-01  +  4.3248e-01  +  3.7412e-02)
                         (7 , 2.4911e-01  +  1.8469e+00  +  6.5398e-02)
                         (9 , 3.0780e-01  +  6.1947e+00  +  5.9316e-02)
                         (10, 4.6076e-01  +  6.5707e+00  +  1.3448e-01)
                         (12, 4.5589e-01  +  1.1313e+01  +  1.2139e-01)
                         (15, 6.6060e-01  +  1.9609e+01  +  1.4311e-01)
                         (20, 8.7763e-01  +  5.3270e+01  +  1.2844e-01)
                         (30, 1.1213e+00  +  3.1391e+02  +  2.5272e-01)
                         (40, 1.3590e+00  +  8.0369e+02  +  3.6856e-01)
                        };\addlegendentry{Est.};

  \addplot+[black,solid,no marks,domain=3:38] {x^4/3}; \addlegendentry{$\mathcal{O}(d^4)$};
  \addplot+[red,dashed,mark=square*,mark options={red}] coordinates{%
                         (3 , 3.8094e-02    +  3.2771e-01)
                         (5 , 2.4253e-01    +  6.7586e+00)
                         (7 , 4.8550e-01    +  7.5178e+01)
                         (9 , 7.0426e-01    +  5.9078e+02)
                         (10, 1.0725e+00    +  1.1507e+03)
                         (12, 1.4246e+00    +  2.2609e+03)
                         (15, 2.0907e+00    +  3.5639e+03)
                         (20, 3.5403e+00    +  1.1407e+04)
                         (30, 1.8594e+01    +  8.3720e+04)
                         (40, 2.2517e+01    +  3.3919e+05)
                        };
 \end{axis}
\end{tikzpicture}
}
\end{figure}
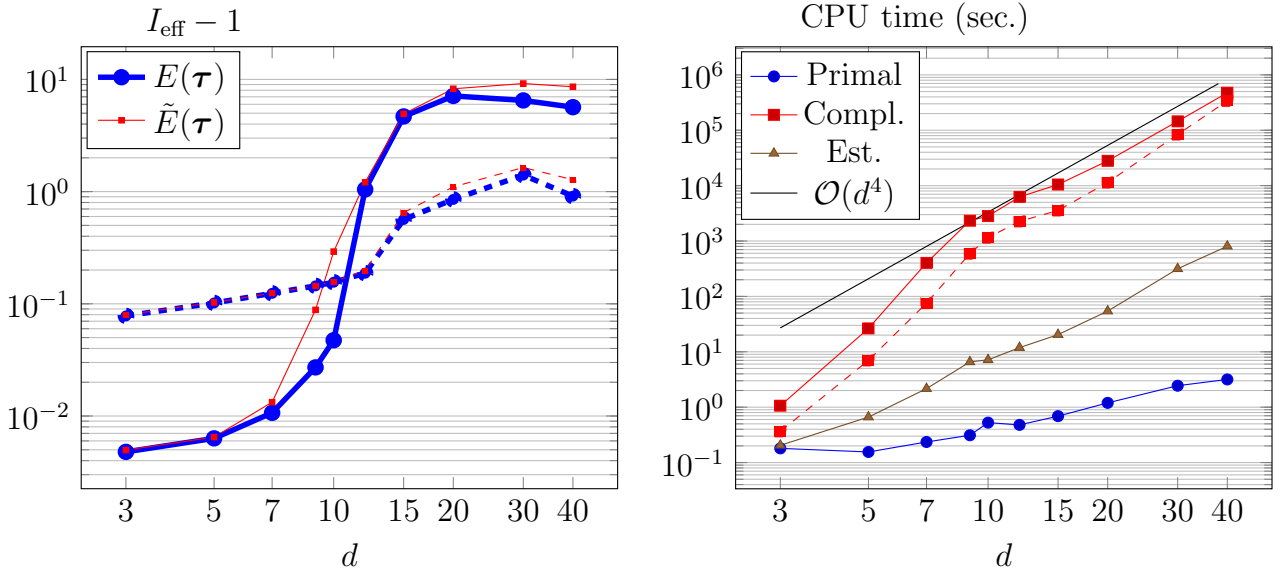

Figure~\ref{fig:hh:d} shows differences of indices of effectivity from 1 (left) and the corresponding CPU times (right).
The efficiency of error estimators deteriorates with the dimension.
However, indices of effectivity tend to stabilize below $10$, which still provides a reasonable error bound.
This deterioration is mainly due to the condition number of the complementary system.
For the smaller $n$ the matrix is well conditioned and the effectivity indices are much closer to 1.
Interestingly, the cost of solving the complementary problem grows with a slower rate $\mathcal{O}(d^4)$ than that for the Poisson problem.
The reason for this is roughly the same number of GMRES iterations for the reduced system \eqref{eq:locblock} for all dimensions,
since the potential term improves the conditioning of this system.

\section{Conclusions}
\label{se:concl}
This paper presents fully computable a posteriori error bounds for low-rank tensor approximate solutions of high dimensional reaction-diffusion problem. These bounds are guaranteed in the sense that they are proved to be above the energy norm of the total error including the discretization error, iteration error in the linear solver, and tensor truncation errors.
The local (elementwise) error estimators may be affected by tensor truncation and quadrature errors.
However, once the complementary solution is obtained, the error estimators can be computed without solving a PDE, and these truncation and quadrature errors can be made negligible
or even zero in certain cases.
This provides a tool for reliable control of both total and local errors in high dimensional computations.

The guaranteed error bound is based on an approximate solution of the complementary problem. Numerical experiments show that the CPU time to solve it grows considerably faster with the dimension than the CPU time needed for the original reaction-diffusion problem. However, the growth is polynomial, which is still feasible in comparison with the exponential complexity of classical approaches.

Interestingly, the error bound is guaranteed for \emph{any} conforming approximation of the primal and complementary solution. Consequently, these solutions can be polluted by arbitrary errors and the estimator still provides a guaranteed error bound.

For efficient computational algorithms, estimates or at least indicators of sizes of individual components of the total error are desirable, see, e.g. \cite{ErnVoh2013,PapStrVoh2018}. Separate estimates of the discretization, algebraic, and tensor truncation errors would enable us to adaptively identify the major source of the error and automatically decide whether to refine the mesh, do more iterations of the algebraic solver, or increase ranks of tensor approximations \cite{Schneider-HT-SFEM-2016}. These separate estimates may be a subject of further research.

Another line of further research should focus on a more efficient way of how to compute the complementary solution. For low dimensional problems, fast and trivially parallelizable local flux reconstructions are known. However, a generalization of these ideas to high dimensions is still open.

\bibliographystyle{plain}
\bibliography{vejchod_aee,vejchod,our,tensor}

\end{document}